\DeclareMathAlphabet{\mathpzc}{OT1}{pzc}{m}{it}
\newtheorem{theorem}{Theorem}[section]
\newtheorem{proposition}[theorem]{Proposition}
\newtheorem{lemma}[theorem]{Lemma}
\theoremstyle{definition}
\newtheorem{definition}[theorem]{Definition}
\theoremstyle{remark}
\newtheorem{remark}[theorem]{Remark}
\newtheorem{remarks}[theorem]{Remarks}
\def\le{\leqslant}
\newcommand{\CA}{{\mathcal A}}
\newcommand{\CC}{{\mathcal C}}
\newcommand{\CE}{{\mathcal E}}
\newcommand{\CG}{{\mathcal G}}
\newcommand{\CH}{{\mathcal H}}
\newcommand{\CI}{{\mathcal I}}
\newcommand{\CJ}{{\mathcal J}}
\newcommand{\CO}{{\mathcal O}}
\newcommand{\CU}{{\mathcal U}}
\newcommand{\CV}{{\mathcal V}}
\newcommand{\CW}{{\mathcal W}}
\newcommand{\CZ}{{\mathcal Z}}
\newcommand{\lDelta}{{\ol \Delta}}
\newcommand{\SA}{{\mathscr A}}
\newcommand{\SB}{{\mathscr B}}
\newcommand{\SC}{{\mathscr C}}
\newcommand{\SD}{{\mathscr D}}
\newcommand{\SM}{{\mathscr M}}
\newcommand{\SN}{{\mathscr N}}
\newcommand{\fg}{{{\mathfrak g}}}
\newcommand{\hCW}{{\widehat\CW}}
\newcommand{\lCG}{{\ol\CG}}
\newcommand{\lCZ}{{\ol\CZ}}
\newcommand{\lCV}{{\ol\CV}}
\newcommand{\tCG}{\widetilde{\CG}}
\newcommand{\DR}{{\mathbb R}}
\newcommand{\DZ}{{\mathbb Z}}
\newcommand{\DN}{{\mathbb N}}
\newcommand{\bM}{{\mathbf M}}
\newcommand{\bH}{{\mathbf H}}
\newcommand{\hbH}{\widehat{\mathbf H}}
\newcommand{\ch}{{\operatorname{char}\, }}
\newcommand{\End}{{\operatorname{End}}}
\newcommand{\Hom}{{\operatorname{Hom}}}
\newcommand{\supp}{{\operatorname{supp}}}
\newcommand{\catmod}{{\operatorname{-mod}}}
\newcommand{\im}{{\operatorname{im}\,}}
\newcommand{\ol}{\overline}
\newcommand{\id}{{\operatorname{id}}}
\newcommand{\sur}{\mbox{$\to\!\!\!\!\!\to$}}
\newcommand{\linie}{{\,\text{---\!\!\!---}\,}}
\newcommand{\llinie}{{\text{---\!\!\!---\!\!\!---}}}
\newcommand{\Aff}{{\operatorname{Aff}}}
\newcommand{\comment}[1]{}
\newcommand{\refl}{{\operatorname{ref}}}
\newcommand{\fin}{{\operatorname{fin}}}
\begin{document}

\pagenumbering{arabic}
\title[]{Filtered  moment graph sheaves} \author[]{Peter Fiebig, Martina Lanini}
\begin{abstract} We introduce the notion of (co-)filtered sheaves on quotients of moment graphs by a group action. We then introduce a (co-)filtered version of the canonical sheaves of Braden and MacPherson and show that their global sections are the indecomposable projective objects in a suitably defined exact category. 
\end{abstract}

\address{Department Mathematik, FAU Erlangen--N\"urnberg, Cauerstra\ss e 11, 91058 Erlangen}
\email{fiebig@math.fau.de, lanini@math.fau.de}
\maketitle
%\tableofcontents
\section{Introduction}

Moment graphs provide a  powerful tool for the study of character and multiplicity problems in representation theory. 
For example, the Kazhdan-Lusztig conjecture on characters of irreducible highest weight representations of a finite dimensional complex Lie algebra $\fg$ can be reformulated in terms of sheaves on moment graphs. This is based on an equivalence between certain exact categories of ``objects admitting a Verma flag'' (cf. \cite{FieAdv}). 

Here are the main ideas for this equivalence. The BGGH-reciprocity result translates the character problem of Kazhdan and Lusztig into the problem of calculating the Verma multiplicities of indecomposable projective objects in the BGG-category $\CO$ associated with $\fg$: these multiplicities should be given by evaluating Kazhdan--Lusztig polynomials at $1$. The Kazhdan--Lusztig polynomials are the entries of the base change matrix with respect to the standard and self-dual $\DZ[v^{\pm 1}]$-bases of the Hecke algebra $\bH$ of $\fg$. 

Now, to any block $\CO_\Lambda$ of $\CO$ one can associate a moment graph $\CG=\CG_\Lambda$. We denote by $\CZ$ the structure algebra of $\CG$ and let $\CC^{\fin}$ be the $\DZ$-graded category of   $\CZ$-modules {\em admitting a Verma flag}. The Grothendieck group of $\CC^{\fin}$ can naturally by identified with the $\DZ[v^{\pm 1}]$-module underlying the  Hecke algebra $\bH$. Even though $\CC^{\fin}$ is not an abelian category, it carries a non-standard exact structure in such a way that it is equivalent, as an exact category, to the category of modules admitting a Verma flag in a deformed version of $\CO_\Lambda$. In particular, the projective objects in $\CC^{\fin}$ correspond to the projectives in the deformed version of $\CO_\Lambda$. The Kazhdan--Lusztig conjecture is now equivalent to the statement that   in the Grothendieck group of $\CC^{\fin}$ the indecomposable projective objects correspond to the Kazhdan--Lusztig self-dual elements. 

The advantage of the above reformulation is that there is an algorithm of linear-algebraic nature that constructs the indecomposable projective objects in $\CC^{\fin}$.  In order to give a combinatorial model for the equivariant intersection cohomology of complex varieties with a torus action, Braden and MacPherson described in \cite{BMP}
 an algorithm that produces the  {\em canonical sheaves} on a moment graph. It turns out that $\CG_\Lambda$ is the moment graph of a (partial) flag variety associated with the Langlands dual Lie algebra and that  the global sections of the Braden--MacPherson sheaves yield the indecomposable projectives in $\CC^{\fin}$. Hence these sheaves encode character formulas for the simple highest weight modules for complex semisimple Lie algebras.
 
 We would like to establish an analogous result in a case where character formulas are not known in general, not even conjecturally. Let $G$ be a reductive group defined over an algebraically closed field $k$ of characteristic $p>0$. Lusztig conjectured in \cite{LusProc} a  formula for the characters of restricted irreducible representations of $G$. This formula is known to be true if  the characteristic $p$ is large enough. More precisely, for any root system $R$ there is a number $N$ such that the formula holds for all groups $G$ with root system $R$ that are defined over a field $k$ of characteristic $p>N$.  Many representation theorists were hoping, for a long time, that the Coxeter number of $R$ could be taken for $N$, but is not true (cf. \cite{W}). An upper bound for $N$ can be found in \cite{FieCrelle}. 
 
 However, if $p$ is at least the Coxeter number, one can obtain partial information from an affine version of the moment graph theory used in the complex case:  In \cite{FieJAMS} a functor from affine moment graph sheaves to restricted modules over the Lie algebra of $G$ (graded by the weight lattice) is constructed. This functor was used to establish an upper bound on the relevant Jordan-H\"older multiplicities. However, it is not clear whether this upper bound is in fact sharp and the functor itself does not establish a categorical equivalence.  It  is hence natural to ask whether it is possible to find a more suitable moment graph theory for the modular setting that first gives precise information on the multiplicities and secondly yields a categorical equivalence. This is the problem we want to investigate in this paper, in \cite{FieLan} and in further forthcoming papers.

In the modular setting outlined above, the role of the Hecke algebra is played by  the {\em periodic module} of the affine Hecke algebra (cf. \cite{LusAdv}). To $G$ we associate the set of alcoves $\SA$ that is acted upon by the affine Weyl group $\hCW$. The set $\SA$ carries a certain partial order, the {\em generic Bruhat order}, and a quantization of this data yields the periodic module $\bM$ over the affine Hecke algebra $\hbH$. Again the periodic polynomials  appear as entries in a base change $\SA\times\SA$-matrix associated with a standard and a self dual basis (of a submodule of $\bM$).  The name {\em periodic polynomials} stems from the fact that these entries are invariant  under simultaneous translation of row and column indices by an element of the root lattice. 

This is our motivation to consider the following: We associate with the $\hCW$-action on $\SA$ a {\em periodic moment graph} $\tCG$. It is acted upon by the root lattice $\DZ R$. We then consider the quotient graph $\lCG$ and introduce a category of sheaves on $\lCG$ that carry a cofiltration indexed by poset ideals in $\SA$. Then we introduce a global section functor that associates with such a sheaf a similarly cofiltered $\lCZ$-module.  Inside the category of cofiltered $\lCZ$-modules  we define the subcategory $\CC$ of objects admitting a Verma flag, and the (larger) category $\CC^{\refl}$ of reflexive objects. We endow both categories with an exact structure. Finally we describe a cofiltered version of the Braden--MacPherson algorithm and show that the global sections of the resulting sheaves yield the indecomposable projectives in $\CC^{\refl}$. 

We do not know how to show that these global sections actually admit a Verma flag, i.e. are contained in $\CC$, and this statement might even be false in general situations. However, in the companion paper \cite{FieLan} we study $\CC^{\refl}$ and the projectives therein from a very different perspective. The translation functors  (associated with the simple affine reflections in $\hCW$) yield an alternative and independent construction of the projectives in $\CC^{\refl}$. This construction shows, in addition, that the projectives in $\CC^{\refl}$ do actually admit a Verma flag, i.e. are contained in  $\CC$. Moreover, in \cite{FieLan} we prove the result that justifies our effort: For almost all characteristics (this includes characteristic $0$), the stalk multiplicities of the indecomposable projectives are indeed given by Lusztig's periodic polynomials.
  
We will discuss the representation theoretic applications of our theory of cofiltered sheaves in a forthcoming paper. 
In particular, we will relate our category with the Andersen-Jantzen-Soergel combinatorial category from \cite{AJS}.

\section{Acknowledgements}
This work was partially supported by the DFG grant SP1388.

\section{Moment graphs with a group action}

In this section we collect some basic notions and results. 
\subsection{Moment graphs}
For the purposes of this article, the following is a convenient definition of a moment graph. It differs slightly from previous definitions  (see, e.g., \cite{FieAdv}), but the difference is not significant at all. 
Let $X$ be a lattice, i.e.~a free abelian group of finite rank.
\begin{definition} A moment graph over $X$ is a graph given by a set of vertices $\CV$, a set of edges $\CE$, and a labeling $\alpha\colon \CE\to X/\{\pm1\}$. 
%We assume that the graph satisfies the following:
%\begin{itemize}
%\item $\alpha(E)\ne 0$ for all edges $E$ of $\CG$.
%\item Two vertices are connected by at most one edge.
%\item A vertex is not connected to itself.
%\end{itemize}
We assume that $\CG$ has no loops, i.e. no vertex is connected to itself by an edge. 
\end{definition}
We use the notation $x\stackrel{\alpha}\linie y$ for an edge connecting $x,y\in\CV$ with label $\alpha\in X/\{\pm 1\}$.
\begin{definition}
An {\em ordered moment graph} is a moment graph as above, together with a partial order $\le$ on the set $\CV$ such that $x,y\in\CV$ are comparable if there is an edge $x\llinie y$. 
\end{definition}

\subsection{Moment graphs with a group action}\label{subsec-groupact}

Morphisms of moment graphs were introduced in \cite{LanJoA}. We won't need the most general notion of a morphism, so the following will suffice: If $\CG_1$ and $\CG_2$ are moment graphs over $X$, then we call an isomorphism $\sigma\colon \CG_1\to\CG_2$ of the underlying abstract graphs a {\em moment graph isomorphism} if it preserves the labels, i.e. if $\sigma$ maps the edge $x\stackrel{\pm\alpha}\linie y$ to the edge $\sigma(x)\stackrel{\pm\alpha}\linie\sigma(y)$. 
Now let $G$ be a group. 
\begin{definition} A {\em  $G$-moment graph} is a moment graph $\CG$ together with an action of $G$ on $\CG$ by moment graph automorphisms.
\end{definition}

Let $\CG$ be a  $G$-moment graph. 
\begin{definition} \label{def-GMom}The quotient of $\CG$ by the $G$-action is the moment graph $\lCG=G\backslash \CG$ that is given by the following data:
\begin{enumerate} 
\item Its set of vertices is $\ol \CV=G\backslash \CV$, the set of  $G$-orbits in $\CV$.
\item Let $\Omega, \Omega^\prime\in\ol\CV$, $\Omega\ne\Omega^\prime$ be vertices and $\pm\alpha\in X/\langle \pm 1\rangle$.  Then  $\Omega$ is connected to $\Omega^\prime$ by a $\pm\alpha$-edge in $\lCG$ if and only if there are vertices $x\in \Omega$ and $x^\prime\in\Omega^\prime$ of $\CG$ that are connected by a $\pm\alpha$-edge.
\end{enumerate}
\end{definition}

\begin{remarks} Note that a vertex $v\in\Omega$ can be connected to several  $v^\prime\in\Omega^\prime$ by a $\pm\alpha$-edge. Even if this is the case, we only have a single $\pm\alpha$-edge between $\Omega$ and $\Omega^\prime$. \end{remarks}

In the next section we introduce the (for us) motivating example of a moment graph with a group action.
\subsection{An example: the periodic moment graph}

From now on we fix  a finite irreducible root system $R$ in a real finite dimensional vector space  $V$.  For any $\alpha\in R$ we denote by $\alpha^\vee\in V^\ast=\Hom_\DR(V,\DR)$ the corresponding coroot. 
The weight and coweight lattices are 
\begin{align*}
X&=\{\lambda\in V\mid \langle \lambda,\alpha^\vee\rangle\in\DZ\text{ for all $\alpha^\vee\in R^\vee$}\},\\
X^\vee&=\{v\in V^\ast\mid \langle \alpha,v\rangle\in\DZ\text{ for all $\alpha\in R$}\}.
\end{align*}
Here, and in the following, we denote by $\langle\cdot,\cdot\rangle\colon V\times V^\ast\to \DR$ the natural pairing. We have $\DZ R\subset X$.

For any $\alpha^\vee\in R^\vee$ and $n\in\DZ$ we define
\begin{align*}
 H_{\alpha^\vee,n}&:=\{\mu\in V\mid \langle \mu,\alpha^\vee\rangle = n\}, \\
H_{\alpha^\vee,n}^+&:=\{\mu\in V\mid \langle \mu, \alpha^\vee\rangle>n\},\\
H_{\alpha^\vee,n}^-&:=\{\mu\in V\mid \langle \mu,\alpha^\vee\rangle<n\}.
\end{align*}
The $H_{\alpha^\vee,n}$ are called {\em reflection hyperplanes}. We clearly have $H^{(\pm)}_{\alpha^\vee,n}=H^{(\mp)}_{-\alpha^\vee,-n}$.
 The connected components of $V\setminus\bigcup_{\alpha^\vee\in R^{\vee,+},n\in\DZ}H_{\alpha^\vee,n}$ are called {\em alcoves}. We denote by $\SA$ the set of alcoves.  

For $\lambda\in X$ we denote by $t_\lambda\colon V\to V$ the affine translation $\mu\mapsto \lambda+\mu$. We clearly have
\begin{align*}
t_\lambda(H_{\alpha^\vee,n})=H_{\alpha^\vee,n+\langle\lambda,\alpha^\vee\rangle},\quad
t_\lambda(H_{\alpha^\vee,n}^\pm)=H_{\alpha^\vee,n+\langle\lambda,\alpha^\vee\rangle}^\pm.
\end{align*}
Hence $t_\lambda$ induces a bijection $t_{\lambda,\SA}\colon \SA\to\SA$.

We  denote by
\begin{align*}
s_{\alpha^\vee,n}\colon V&\to V\\
\lambda&\mapsto \lambda-(\langle \lambda,\alpha^\vee\rangle-n)\alpha
\end{align*}
the affine reflection with fixed point hyperplane $H_{\alpha^\vee,n}$. Note that each reflection $s_{\alpha^\vee,n}$ stabilizes the weight lattice $X$ and the root lattice $\DZ R$.  The {\em affine Weyl group} is the subgroup $\hCW\subset\Aff(V)$ generated by the affine reflections $s_{\alpha^\vee,n}$ for $\alpha^\vee\in R^\vee$ and $n\in\DZ$.  
The finite Weyl group $\CW$ is the subgroup of $\hCW$ generated by the linear reflections $s_{\alpha^\vee}=s_{\alpha^\vee,0}$. The root lattice $\DZ R\subset X$ is  contained  in $\hCW$, as
$$
s_{\alpha^\vee,1}s_{\alpha^\vee,0}=t_\alpha
$$
for each $\alpha^\vee\in R^\vee$. It is  a normal subgroup and we have
$$
\hCW=\CW\ltimes \DZ R.
$$

We denote by $\preceq$ the partial order on $\SA$ that is generated by the relations $A\preceq s_{\alpha^\vee,n}(A)$ if $A\subset H_{\alpha^\vee,n}^-$ (and hence $s_{\alpha^\vee,n}(A)\in H_{\alpha^\vee,n}^+$) for a positive coroot $\alpha^\vee$ and $n\in\DZ$. 
Note that the translations $t_{\lambda,\SA}\colon \SA\to\SA$ preserve the order $\preceq$, i.e.  we have 
$$
A\preceq B \text{ if and only if } t_\gamma(A)\preceq t_\gamma(B) \text{ for all $\gamma\in X$}.
$$
This order is called the {\em generic Bruhat order}

Now we define the for us most important example of a moment graph with a group action. 
\begin{definition} The moment graph $\tCG$ is the following moment graph over the lattice $X^\vee$:
\begin{itemize}
\item Its set of vertices is the set of alcoves $\SA$. This set is partially ordered by the generic Bruhat order $\preceq$.
\item The vertices  $A,B\in\SA$ are connected by an edge if there is $\alpha\in R^+$ and $n\in\DZ$ with $B=s_{\alpha^\vee,n}A$. 
\item The edge $A\linie s_{\alpha^\vee,n}A$ is labeled by $\pm\alpha^\vee\in X^\vee/\langle\pm 1\rangle$.
\end{itemize}
\end{definition}

\begin{lemma} For $\lambda\in\DZ R$, the bijection $t_{\lambda,\SA}\colon\SA\to\SA$, $A\mapsto \lambda+A$ induces an automorphism of the moment graph $\tCG$. 
\end{lemma}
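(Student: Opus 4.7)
The plan is straightforward: a moment graph automorphism of $\tCG$ consists of a label-preserving bijection on vertices that sends edges to edges, so I would verify these three items in turn.

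First, the map $t_{\lambda,\SA}\colon \SA\to\SA$ is already known to be a bijection from the discussion just above the lemma; this uses only that $t_\lambda$ permutes reflection hyperplanes via $t_\lambda(H_{\alpha^\vee,n})=H_{\alpha^\vee,n+\langle\lambda,\alpha^\vee\rangle}$, so it sends connected components of the hyperplane complement to connected components.

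The main step is the conjugation identity
$$
t_\lambda\circ s_{\alpha^\vee,n}\circ t_{-\lambda}=s_{\alpha^\vee,n+\langle\lambda,\alpha^\vee\rangle},
$$
which I would check by a one-line computation on $\mu\in V$ using the explicit formula for $s_{\alpha^\vee,n}$. Granting this identity, suppose $A$ and $B$ are joined by an edge in $\tCG$, so $B=s_{\alpha^\vee,n}A$ for some $\alpha\in R^+$ and $n\in\DZ$. Then
$$
t_\lambda B=t_\lambda s_{\alpha^\vee,n}t_{-\lambda}(t_\lambda A)=s_{\alpha^\vee,\,n+\langle\lambda,\alpha^\vee\rangle}(t_\lambda A),
$$
and since the right-hand side involves the same positive coroot $\alpha^\vee$, the alcoves $t_\lambda A$ and $t_\lambda B$ are connected by an edge in $\tCG$. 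Applying the same argument to $t_{-\lambda}$ gives the converse implication, so $t_{\lambda,\SA}$ induces a bijection on edges.

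Finally, the edge $A\linie s_{\alpha^\vee,n}A$ is labeled by $\pm\alpha^\vee$, and by the computation above its image under $t_{\lambda,\SA}$ is labeled by the same $\pm\alpha^\vee$, so labels are preserved. There is no real obstacle here; the content of the lemma is the conjugation identity, which is a routine consequence of the translation formula for hyperplanes stated earlier. Note that the hypothesis $\lambda\in\DZ R$ is not needed for the present statement, which holds for any $\lambda\in X$; the restriction to $\DZ R$ anticipates the quotient construction of Definition \ref{def-GMom}, where the $\hCW$-decomposition $\hCW=\CW\ltimes\DZ R$ singles out $\DZ R$ as the relevant translation subgroup.
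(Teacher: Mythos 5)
Your proof is correct and follows essentially the same approach as the paper: it reduces the statement to the conjugation identity $t_\lambda\circ s_{\alpha^\vee,n}\circ t_{-\lambda}=s_{\alpha^\vee,n+\langle\lambda,\alpha^\vee\rangle}$, which immediately shows that edges and labels are preserved. Your closing observation that the argument works for any $\lambda\in X$ (the restriction to $\DZ R$ only mattering for the quotient construction) is also correct, as the paper itself implicitly uses $t_{\lambda,\SA}$ for all $\lambda\in X$ when discussing invariance of the generic Bruhat order.
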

\begin{proof} We view $t_\lambda$ as a bijection on the set of vertices of $\tCG$. So we only have to show that $t_\lambda$ preserves edges and labels. We have $t_\lambda\circ s_{\alpha^\vee,n}\circ t_{-\lambda}=s_{\alpha^\vee,n+\langle\lambda,\alpha^\vee\rangle}$, hence an edge $A\stackrel{\pm\alpha^\vee}\llinie s_{\alpha^\vee,n}(A)$ is mapped to the edge $t_\lambda(A)\stackrel{\pm\alpha^\vee}\llinie s_{\alpha^\vee,n+\langle\lambda,\alpha^\vee\rangle}(t_\lambda(A))$.  
\end{proof}
Hence we obtain an action of the group $\DZ R$ on $\tCG$.
 Here is the description of the corresponding quotient graph. 
\begin{lemma} The quotient graph $\lCG=\DZ R\backslash \tCG$ can be described as follows:
\begin{itemize}
\item Its set of vertices is the set underlying the finite Weyl group $\CW$.
\item The vertices $x,y\in\CW$ are connected by an edge if there is $\alpha^\vee\in R^\vee$ such that $x=s_{\alpha^\vee}y$. This edge is labelled by $\pm\alpha^\vee$.
\end{itemize}
\end{lemma}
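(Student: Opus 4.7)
The plan is to transport the problem from the set of alcoves to the affine Weyl group $\hCW$ via a base alcove, exploit the semidirect product decomposition $\hCW = \CW \ltimes \DZ R$, and then read off both vertices and edges in the quotient.

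First I would fix a base alcove $A_e \in \SA$. Since $\hCW$ acts simply transitively on $\SA$, the map $\hCW \to \SA$, $w \mapsto w A_e$ is a bijection. Under this bijection, the action of $\DZ R \subset \hCW$ by translation on $\SA$ corresponds to left multiplication on $\hCW$. Writing a general element of $\hCW$ uniquely as $t_\lambda v$ with $v \in \CW$ and $\lambda \in \DZ R$, one sees that the left $\DZ R$-cosets in $\hCW$ are exactly the sets $\{t_\mu v : \mu \in \DZ R\}$, parametrized by $v \in \CW$. This identifies $\DZ R \backslash \SA$ with $\CW$ as a set, with orbit representatives $vA_e$ for $v \in \CW$.

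For the edges, Definition \ref{def-GMom} asks when two orbits $[x], [y]$ (with $x,y \in \CW$) are connected by a $\pm\alpha^\vee$-edge in $\lCG$. By simple transitivity, this happens iff there exist $\mu,\nu \in \DZ R$ and $n \in \DZ$ with the identity $t_\nu y = s_{\alpha^\vee,n} t_\mu x$ holding in $\hCW$. Now write $s_{\alpha^\vee,n} = t_{n\alpha} s_{\alpha^\vee}$ and use the conjugation rule $s_{\alpha^\vee} t_\mu = t_{s_{\alpha^\vee}(\mu)} s_{\alpha^\vee}$ to compute
\[
s_{\alpha^\vee,n}\, t_\mu\, x \;=\; t_{n\alpha + s_{\alpha^\vee}(\mu)}\bigl(s_{\alpha^\vee} x\bigr).
\]
Since $\DZ R$ is $\CW$-stable and $\alpha \in \DZ R$, the translation part lies in $\DZ R$. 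By uniqueness of the semidirect product decomposition, the above equation is equivalent to the two conditions $y = s_{\alpha^\vee} x$ and $\nu = n\alpha + s_{\alpha^\vee}(\mu)$. Conversely, if $y = s_{\alpha^\vee} x$, then taking $\mu = \nu = 0$ and $n = 0$ shows that $xA_e$ and $yA_e = s_{\alpha^\vee}(xA_e)$ are connected in $\tCG$ by an edge with label $\pm\alpha^\vee$; this is genuinely an edge (not a loop) since $s_{\alpha^\vee} \neq e$ in $\CW$ forces $xA_e \neq yA_e$.

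This proves both assertions of the lemma. The label is automatically $\pm\alpha^\vee$ by construction, and Definition \ref{def-GMom} handles possible multiplicity by recording a single $\pm\alpha^\vee$-edge between $[x]$ and $[y]$ regardless of how many triples $(\mu,\nu,n)$ realize the connection. I do not anticipate a genuine obstacle: the argument is bookkeeping. The only point one must be careful about is distinguishing the three compatible actions in play (the left $\hCW$-action on $V$, the induced action on $\SA$, and the group structure itself) and ensuring that passing to the quotient by $\DZ R$ kills exactly the translation part of the semidirect product.
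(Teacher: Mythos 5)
Your proof is correct and takes essentially the same route as the paper: the paper's entire argument is the one sentence that the claim "immediately follows from the fact that $\SA$ is a principal homogeneous set for the $\hCW$-action and that $\hCW=\CW\ltimes\DZ R$", and your write-up is precisely the bookkeeping that makes that sentence a proof (identifying $\DZ R\backslash\SA$ with $\CW$ via a base alcove, then unwinding $t_\nu y = s_{\alpha^\vee,n}t_\mu x$ through the semidirect product decomposition to isolate the finite part $y=s_{\alpha^\vee}x$). You also correctly note that the resulting edge is not a loop, which the paper leaves implicit.
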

\begin{proof} This immediately follows from the fact that $\SA$ is a principal homogeneous set for the $\hCW$-action and that $\hCW=\CW\ltimes\DZ R$.    
\end{proof}
Before we study moment graphs with a group action in more detail, we collect some more definitions and facts for arbitrary moment graphs.  
\subsection{The structure algebra}
Let $\CH$ be a  moment graph over the lattice $X$ with vertex set $\CV$, and let $k$ be a field. 
We  denote by $X_k=X\otimes_\DZ k$ the $k$-vector space associated with $X$. For $\lambda\in X$ we write $\lambda$ instead of $\lambda\otimes 1\in X_k$. 
Let $S=S(X_k)$ be the symmetric algebra of the $k$-vector space $X_k$. We consider $S$ as a $\DZ$-graded algebra with $X_k\subset S$ being the homogeneous component of degree 2. Most of the modules we deal with in this paper are $\DZ$-graded and we denote the graded components by $M=\bigoplus_{n\in\DZ} M_n$. We assume that all homomorphisms between graded modules respect the grading. 

\begin{definition} The {\em structure algebra} of $\CH$  over the field $k$ is
$$
\CZ=\CZ_k(\CH)=\left\{(z_x)\in\prod_{x\in\CH} S\left|
\begin{matrix}
\text{ $z_x\equiv z_y\mod\alpha(E)$ for all edges }\\
\text{ $E\colon x\llinie y$ of $\CH$}
\end{matrix}\right\}\right..
$$
\end{definition}
This is an algebra by componentwise addition and multiplication. It contains a copy of $S$ on the diagonal, and it is a unital, associative, graded $S$-algebra (note that we take the product above in the category of graded $S$-modules, i.e. componentwise). 

\subsection{Localizations}
Let $L$ be a sublattice of $X$. We say that $L$ is {\em saturated} if the quotient $X/L$ is torsion free, i.e. if the inclusion $L\subset X$ splits. Let $L$ be a saturated sublattice of $X$.

\begin{definition} We denote by $\CH^L$ the sub-moment graph of $\CH$ that we obtain by keeping all vertices and by deleting all edges that are labeled by elements that are not contained in $L/\langle\pm 1\rangle\subset X/\langle \pm1 \rangle$.
We denote by $C(\CH^L)$ the set of connected components of $\CH$ (as an abstract graph).
\end{definition} 
For example,  we can identify $C(\CH^0)$ with the set of vertices $\CV$.

We set    
$$
S^{L}:=S[\alpha^{-1}\mid \alpha\in X, \alpha\ne 0, \alpha\not\in L].
$$
As we invert homogeneous elements, this is a graded algebra as well. 
We have, in particular,  $S^X=S$ and $S^{0}=S[\alpha^{-1}\mid \alpha\in X,\alpha\ne 0]$. For $L\subset L^\prime$
we have a canonical injective homomorphism $S^{L^\prime}\subset S^L$. For an $S$-module $M$ we set 
$$
M^{L}:=M\otimes_S S^{L}.
$$
If $M$ is a graded module, then so is $M^L$ in a natural way.  But note that since we invert homogeneous elements in $S$ of degree 2, we  have an isomorphism $M^L\cong M^L[2]$ if $L\ne X$.

\begin{lemma}\label{lemma-locstruc} Suppose that $\CH$ is a finite moment graph. 
Then the canonical inclusion $\iota\colon\CZ(\CH)\to\CZ(\CH^L)$ is a bijection after localization, i.e. $\iota^L\colon \CZ(\CH)^L\to\CZ(\CH^L)^L$ is an isomorphism.
\end{lemma}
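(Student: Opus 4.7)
The plan is to prove this by realizing both sides as kernels in the same ambient module $\prod_{x\in\CV} S^L$ and observing that the extra congruences that distinguish $\CZ(\CH)$ from $\CZ(\CH^L)$ become vacuous after inverting elements of $X\setminus L$.

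First I would note the tautological inclusion $\iota\colon \CZ(\CH)\hookrightarrow \CZ(\CH^L)$: since $\CH^L$ is obtained from $\CH$ by deleting edges, an element satisfying all the edge congruences of $\CH$ automatically satisfies the smaller set of congruences of $\CH^L$. Next, I would present $\CZ(\CH)$ as the kernel of an explicit $S$-linear map
\[
\prod_{x\in\CV} S\;\longrightarrow\;\prod_{E\colon x\llinie y} S/\alpha(E)S,\qquad (z_x)\mapsto \bigl(z_x-z_y\bmod \alpha(E)\bigr)_E,
\]
and analogously $\CZ(\CH^L)$ as the kernel of the corresponding map where the target only ranges over edges $E$ with $\alpha(E)\in L/\langle\pm 1\rangle$.

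Since $\CH$ is finite, both products are finite, so they commute with the exact functor $-\otimes_S S^L$. Hence
\[
\CZ(\CH)^L=\ker\Bigl(\prod_{x\in\CV}S^L \to \prod_{E\colon x\llinie y} (S/\alpha(E)S)^L\Bigr),
\]
and similarly for $\CZ(\CH^L)^L$. The decisive observation is that for every edge $E$ of $\CH$ whose label $\alpha(E)$ is not in $L$, the element $\alpha(E)$ becomes a unit in $S^L$ by definition of the localization, so $(S/\alpha(E)S)^L=0$. Thus the edges of $\CH$ that do not belong to $\CH^L$ contribute trivial factors to the target, and the two kernels coincide as submodules of $\prod_{x\in\CV}S^L$.

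The only point requiring care is the interchange of localization with the defining kernel presentation, which is precisely where the finiteness hypothesis on $\CH$ is used, so no real obstacle arises. I would then conclude that $\iota^L$ is an isomorphism by identifying both $\CZ(\CH)^L$ and $\CZ(\CH^L)^L$ with the same explicit subset of $\prod_{x\in\CV} S^L$, namely the tuples satisfying the congruences along the edges of $\CH^L$.
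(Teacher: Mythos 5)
Your proof is correct, but it takes a genuinely different route from the paper's. The paper argues in two separate steps: injectivity of $\iota^L$ follows from injectivity of $\iota$ (the paper additionally invokes torsion-freeness of $\CZ(\CH^L)$, although exactness of localization already suffices), and surjectivity follows from the observation that if $\alpha_1,\dots,\alpha_n$ are the labels of the edges of $\CH$ deleted in $\CH^L$, then $\alpha_1\cdots\alpha_n\,\CZ(\CH^L)\subset\CZ(\CH)$; after localization the $\alpha_i$ become units, so this inclusion forces surjectivity. Your approach instead realizes both structure algebras simultaneously as kernels of a map $\prod_{x\in\CV}S\to\prod_{E}S/\alpha(E)S$, pushes the flat functor $-\otimes_S S^L$ through the kernel and through the finite products, and notes that the target factors $(S/\alpha(E)S)^L$ indexed by deleted edges vanish because $\alpha(E)$ is a unit in $S^L$. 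The two arguments rest on the same underlying phenomenon (the deleted labels become invertible), but yours makes the mechanism structural and manifestly symmetric by identifying both $\CZ(\CH)^L$ and $\CZ(\CH^L)^L$ with the same explicit submodule of $\prod_{x\in\CV}S^L$, whereas the paper's version is shorter and more hands-on. The only small caveat in your write-up is that the map $(z_x)\mapsto(z_x-z_y\bmod\alpha(E))_E$ implicitly requires choosing an orientation on each edge; the kernel is independent of these choices, so the argument is unaffected, but it is worth a word.
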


\begin{proof} As $\iota$ is injective and $\CZ(\CH^L)$ is torsion free as an $S$-module, the map $\iota^L$ is injective.  Let $\alpha_1$, \dots, $\alpha_n$ be the labels of all edges of $\CH$ that are deleted in $\CH^L$. Then $\alpha_1\cdots\alpha_n\CZ(\CH^L)\subset\CZ(\CH)$, which implies the surjectivity of $\iota^L$.
\end{proof}

\subsection{Generic decompositions}\label{subsec-gendec}

Let $M$ be a module over  $\CZ$.
\begin{definition}\begin{enumerate}
\item  For a vertex $x$ of $\CH$ we define $M^{0,x}\subset M^0$ as the sub-$\CZ$-module of all elements $m\in M^{0}$ on which $(z_w)\in\CZ$ acts as multiplication by $z_x\in S$.
\item We say that $M$ is {\em generically semisimple} if $M^{0}=\bigoplus_{x\in\CV} M^{0,x}$. 
%\item If $M$ is generically semisimple, then we define the {\em support} of $M$ by
%$$
%\supp\, M:=\{x\in\CV\mid M^{0,x}\ne 0\}.
%$$
\end{enumerate}
\end{definition}
If $\CH$ is an infinite graph and $M=\CZ$ as a $\CZ$-module it can occur that $M^{0,x}=0$ for all $x$, so $M$ is not generically semisimple. On a finite graph this cannot happen, as we will show next.

\begin{lemma} Suppose that $\CH$ is a finite moment graph. Then every $\CZ$-module  $M$ is generically semisimple.
\end{lemma}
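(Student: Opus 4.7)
The plan is to reduce the problem, via Lemma \ref{lemma-locstruc}, to the case of the discrete graph $\CH^0$ and then exploit the orthogonal idempotents in $\CZ^0$ that this gives.

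First I would apply Lemma \ref{lemma-locstruc} with the sublattice $L=0\subset X$. Since $\CH$ is finite, this yields an isomorphism
$$
\CZ(\CH)^0\xrightarrow{\sim}\CZ(\CH^0)^0.
$$
The graph $\CH^0$ has no edges at all, so the structure algebra is simply a product of copies of $S$ indexed by the vertex set; because $\CV$ is finite, this product is a direct sum
$$
\CZ(\CH^0)=\bigoplus_{x\in\CV}S.
$$
Localizing, $\CZ^0\cong\bigoplus_{x\in\CV}S^0$, and the preimages of the canonical generators are pairwise orthogonal idempotents $e_x\in\CZ^0$ with $\sum_{x\in\CV}e_x=1$.

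Next I would apply these idempotents to $M^0$. The idempotent decomposition $1=\sum_{x}e_x$ in $\CZ^0$ gives a direct sum decomposition
$$
M^0=\bigoplus_{x\in\CV}e_xM^0
$$
as $\CZ^0$-modules (using orthogonality, completeness, and that $M^0$ is a $\CZ^0$-module). The remaining step is to identify $e_xM^0$ with $M^{0,x}$. Under the isomorphism $\CZ^0\cong\bigoplus_x S^0$, an element $(z_y)_{y\in\CV}\in\CZ$ goes to the tuple whose $x$-th coordinate is $z_x$; hence on $e_xM^0$, the element $(z_y)$ acts as multiplication by $z_x$, which is exactly the defining property of $M^{0,x}$. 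Conversely, any $m\in M^{0,x}$ satisfies $e_y\cdot m=\delta_{xy}m$ (since $e_y$ corresponds to the tuple with a $1$ in position $y$ and zeros elsewhere), so $m=e_xm\in e_xM^0$.

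There is no serious obstacle here; the whole argument hinges on Lemma \ref{lemma-locstruc} and the finiteness assumption, which together guarantee that the $\CZ$-action on $M^0$ factors through a finite product of the localizations $S^0$. The mildly delicate point worth double-checking is the compatibility of localization with the finite product, i.e. that $\bigl(\prod_x S\bigr)^0=\bigoplus_x S^0$, which is immediate for finite index sets but would fail for infinite $\CV$ — this is precisely why the finiteness hypothesis cannot be dropped, matching the warning in the text preceding the lemma.
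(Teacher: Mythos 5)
Your proof is correct and takes essentially the same route as the paper: both reduce via Lemma \ref{lemma-locstruc} to the identification $\CZ^0\cong\bigoplus_{x\in\CV}S^0$ and then conclude generic semisimplicity from the resulting orthogonal idempotents. You simply spell out the idempotent decomposition step that the paper leaves implicit in the phrase ``which implies generic semisimplicity.''
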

\begin{proof} Note that $M^0$ is a $\CZ^0$-module. By Lemma \ref{lemma-locstruc} we have $\CZ^0=\bigoplus_{x\in\CV} S^0$, which implies generic semisimplicity.
\end{proof}

\begin{definition}\label{def-suppZmod}  The support of the $\CZ$-module $M$ is
$$
\supp_{\lCZ}\, M=\{x\in\CV\mid M^{0,x}\ne 0\}.
$$
\end{definition}
\subsection{Canonical decompositions}\label{subsec-candec}

We introduce here a slight generalization of the generic decomposition of the previous section. 
 \begin{definition} Let $L\subset X$ be a saturated subset.
We denote by $\CZ^L\catmod^{f}$ the full subcategory of the category of all $\CZ^L=\CZ(\CH)^L$-modules that contains all objects $M$ that are torsion free as $S$-modules and finitely generated as $S$-modules. 
\end{definition}
For saturated subsets $L\subset L^\prime$ of $X$, localization $M\mapsto M^L=M\otimes_{S} S^L$ yields a functor $\CZ^{L^\prime}\catmod^f\to\CZ^L\catmod^{f}$. We will write $\CZ\catmod^f$ instead of $\CZ^X\catmod^f$.

\begin{lemma}\label{lemma-candec} Suppose that $M$ is an object in $\CZ\catmod^f$. Let $L$ be a saturated subset of $X$. Then $M^L$ carries a canonical action of $\CZ(\CH^L)$. In particular, we have a canonical decomposition $M^L=\bigoplus_{\Omega\in C(\CH^L)} M^{L,\Omega}$ of $\CZ(\CH^L)$-modules such that $M^{L,\Omega}\subset\bigoplus_{x\in\Omega}M^{0,x}$.
\end{lemma}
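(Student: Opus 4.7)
The plan is to first generalise Lemma~\ref{lemma-locstruc} to an arbitrary saturated sublattice $L\subset X$, then transport the module structure along the resulting isomorphism, and finally produce the decomposition by exhibiting explicit idempotents attached to the connected components of $\CH^L$. The starting point is the claim that the canonical inclusion $\iota\colon\CZ(\CH)\hookrightarrow\CZ(\CH^L)$ becomes an isomorphism after applying $-\otimes_S S^L$. Injectivity of $\iota^L$ is automatic since $\CZ(\CH^L)$ is torsion-free over $S$. For surjectivity, let $\alpha_1,\dots,\alpha_n$ be the labels of the edges that are deleted in passing from $\CH$ to $\CH^L$; these lie outside $L/\{\pm 1\}$ and hence become invertible in $S^L$. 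Given $z\in\CZ(\CH^L)$, the element $\alpha_1\cdots\alpha_n\cdot z$ already lies in $\CZ(\CH)$: on edges surviving in $\CH^L$ it still satisfies the congruences of $z$, while on a removed edge labelled by $\alpha_j$ the difference $z_x-z_y$ is multiplied by $\alpha_j$ and therefore vanishes modulo $\alpha_j$. This handles the finite case; for infinite $\CH$ the finite generation of $M$ over $S$ lets one reduce to the finite sub-moment graph supporting $M$ before running the argument.

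With this in hand, the $\CZ(\CH^L)$-action on $M^L$ arises from the composition $\CZ(\CH^L)\hookrightarrow\CZ(\CH^L)^L\cong\CZ(\CH)^L$ together with the evident $\CZ(\CH)^L$-action on $M^L$. For each connected component $\Omega\in C(\CH^L)$ I would then introduce the idempotent $e_\Omega:=(\delta_{x\in\Omega})_{x\in\CV}\in\prod_{x\in\CV}S$. Since $\CH^L$ has no edge between distinct components, each $e_\Omega$ trivially satisfies the structure-algebra congruences for $\CH^L$ and hence belongs to $\CZ(\CH^L)$; the family $\{e_\Omega\}_{\Omega\in C(\CH^L)}$ is a complete system of pairwise orthogonal idempotents summing to $1$. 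Setting $M^{L,\Omega}:=e_\Omega\cdot M^L$ then yields the desired direct sum decomposition $M^L=\bigoplus_\Omega M^{L,\Omega}$ as $\CZ(\CH^L)$-modules.

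To check the containment $M^{L,\Omega}\subset\bigoplus_{x\in\Omega}M^{0,x}$, I would localize one step further, from $S^L$ to $S^0$. Torsion-freeness of $M$ makes the canonical map $M^L\hookrightarrow M^0$ injective. On $M^0=\bigoplus_{x\in\CV}M^{0,x}$ (generic semisimplicity, available after reducing to a finite subgraph) the idempotent $e_\Omega$ acts as the identity on $M^{0,x}$ for $x\in\Omega$ and as zero otherwise, so $e_\Omega\cdot M^0=\bigoplus_{x\in\Omega}M^{0,x}$, and the containment follows. The main obstacle I anticipate is the saturated-$L$ analog of Lemma~\ref{lemma-locstruc}: one must argue carefully, in the possibly infinite setting of $\CZ\catmod^f$, that the reduction to a finite subgraph supporting $M$ is compatible both with the $S^L$-localisation and with the full $\CZ$-action, so that the resulting $\CZ(\CH^L)$-module structure on $M^L$ is genuinely canonical (independent of the auxiliary finite subgraph chosen).
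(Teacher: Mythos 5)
Your proposal is correct and takes essentially the same route as the paper: torsion-freeness of $M$ gives the inclusions $M\subset M^L\subset M^0=\bigoplus_{x\in\CV}M^{0,x}$, the componentwise formula defines the $\CZ(\CH^L)$-action on $M^L$ (which the paper simply asserts and which you justify, as intended, via the chain $\CZ(\CH^L)\hookrightarrow\CZ(\CH^L)^L\cong\CZ(\CH)^L$ from Lemma~\ref{lemma-locstruc}), and the component idempotents $e_\Omega$ realise the decomposition, with the containment $M^{L,\Omega}\subset\bigoplus_{x\in\Omega}M^{0,x}$ read off at the level of $M^0$. Two minor remarks: Lemma~\ref{lemma-locstruc} is already stated for an \emph{arbitrary} saturated sublattice $L\subset X$, so you are reproving it rather than generalising it, and from Section~4 onward the paper works only with the finite quotient graph $\lCG$, so the infinite-graph subtlety you raise does not arise in the applications.
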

We call the decomposition above the {\em canonical decomposition} of $M^L$.
\begin{proof} As we assume that $M$ is  torsion free as an $S$-module, we have canonical inclusions $M\subset M^L\subset M^0=\bigoplus_{x\in\CV}M^{0,x}$. Hence we can view each $m\in M$ as an $\CV$-tuple $m=(m_x)$ in such a way that $z.m=(z_xm_x)$ for each $z=(z_x)\in\CZ$. The same formula defines an action of $\CZ^L$ on $M^L$.
\end{proof}

\section{Filtered $\lCZ$-modules}

In this section we will define and study a category that is naturally associated with an ordered moment graph  with a group action. So let $G$ be a group acting on the moment graph $\CG$ by moment graph automorphisms. Suppose furthermore that $\CG$ is an ordered moment graph with vertex set $\CV$. We denote the partial order by $\le$. Note that we do not care whether the $G$-action preserves the order or not. As before we will denote by $\lCG=G\backslash \CG$ the quotient graph and by $\lCV=G\backslash \CV$ its set of vertices.

We now fix a field $k$ and denote by $\lCZ=\CZ(\lCG)$ the  structure algebra of $\lCG$ over $k$. For the rest of this paper we assume that $\lCG$ has the following properties: 
\begin{enumerate}
\item The graph $\lCG$ is finite and contains no double edges. 
\item The pair $(\lCG,k)$ satisfies the GKM-condition, i.e. we have $\ch k\ne 2$ and  for each pair of distinct edges $E$ and $E^\prime$ which share a common vertex we have $\alpha(E)\not\in k \alpha(E^\prime)$. 
\end{enumerate}

Before we introduce the category of cofiltered sheaves on $\lCG$, we  collect some general abstract notions and results. 
\subsection{A topology on partially ordered sets}\label{subsec-TopPar}
Note that we have a partial order $\le$ on the set $\CV$. 
For an element $x$ of $\CV$ we will use the short hand notation $\{\le x\}=\{y\in\CV\mid y\le x\}$. The notations $\{\ge x\}$, $\{< x\}$, etc. have an analogous meaning. We will consider the following topology on $\CV$.
\begin{definition} A subset $\CU$ of $\CV$ is called {\em open}, if $x\in \CU$ and $y\le x$ implies $y\in \CU$, i.e. if 
$$
\CU=\bigcup_{x\in\CU}\{\le x\}.
$$
 A subset $\CA$ of $\CV$ is called {\em closed} if the complement $\CV\setminus \CA$ is open.
\end{definition}
Clearly, a subset $\CA$ of $\CV$ is closed if and only if $\CA=\bigcup_{x\in\CA}\{\ge x\}$.  It is also clear that arbitrary unions and arbitrary intersections of closed subsets are closed. The same holds for open subsets.

\subsection{Cofiltered objects in an abelian category $\CA$}

Let $\CA$ be an abelian category. For notational simplicity we assume that the objects in $\CA$ have elements.  
\begin{definition}
\begin{itemize}
\item 
A $(\CV,\le)$-cofiltered object $M$ in $\CA$ is the data of objects $M^{\le x}$  in $\CA$ for all $x\in\CV$ together with morphisms  $M^{\le x}\to M^{\le y}$, $m\mapsto m|_{\le y}$ whenever $y\le x$, such that for all $m\in M^{\le x}$ we have  $m|_{\le z}= (m|_{\le y})|_{\le z}$ if $z\le y\le x$.
\item A homomorphism $f\colon M\to N$ between $(\CV,\le)$-cofiltered objects is the data of morphisms $f^{\le x}\colon M^{\le x}\to N^{\le x}$ such that for all $y\le x$ the diagram 

\centerline{
\xymatrix{
M^{\le x}\ar[d]_{f^{\le x}}\ar[rr]^{m\mapsto m|_{\le y}}&& M^{\le y}\ar[d]^{f^{\le y}}\\ 
N^{\le x}\ar[rr]^{n\mapsto n|_{\le y}}&& N^{\le y}}
}
\noindent
commutes. 
\end{itemize}
\end{definition} 

Let $M$ be a $(\CV,\le)$-cofiltered object and assume that $\CA$ admits arbitrary products. 
For an open subset $\CJ$ of $\CV$ we then define
$$
M^{\CJ}:=\left.\left\{(m_x)\in\prod_{x\in\CJ} M^{\le x}\right|m_x|_{\le y}=m_y\text{ for all $x,y\in \CJ$ with $y\le x$}\right\}.
$$
This is again an object in $\CA$. 
A morphism $f\colon M\to N$ between cofiltered objects induces a morphism $f^{\CJ}\colon M^{\CJ}\to N^{\CJ}$ in $\CA$.

\begin{remarks}
\begin{enumerate}
\item For $\CJ^\prime\subset\CJ$ the projection $\prod_{x\in\CJ} M^{\le x}\to\prod_{x\in\CJ^\prime} M^{\le x}$ with kernel $\prod_{x\in\CJ\setminus\CJ^\prime} M^{\le x}$ induces a homomorphism $M^\CJ\to M^{\CJ^\prime}$ that we denote by $m\mapsto m|_{\CJ^\prime}$. 
\item We have a canonical identification $M^{\{z\mid z\le x\}}=M^{\le x}$, hence there is no danger of contradicting notations. With this identification, $m=(m_x)\in M^{\CJ}$ satisfies $m_x=m|_{\le x}$. 
\item We write $M^{<x}$ instead of $M^{\{z\mid z<x\}}$.
 \end{enumerate}
\end{remarks}

\begin{lemma} \label{lemma-admcofilt}  Let $M$ be a $(\CV,\le)$-cofiltered object in $\CA$. Let $\{\CJ_i\}_{i\in I}$ be a family of open subsets of $\CV$ and set $\CJ=\bigcup_{i\in I}\CJ_i$. Then the homomorphism
\begin{align*}
M^{\CJ}&\to \prod_{i\in I} M^{\CJ_i},\\
m&\mapsto (m|_{\CJ_i})
\end{align*}
is injective with image 
$$
R=\left.\left\{(m_i)\in  \prod_{i\in I} M^{\CJ_i}\right| m_i|_{\CJ_i\cap\CJ_j}=m_j|_{\CJ_i\cap\CJ_j}\text{ for all $i,j\in I$}\right\}.
$$
\end{lemma}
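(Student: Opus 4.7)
The plan is to verify injectivity and the identification of the image with $R$ separately, using throughout that elements of $M^\CU$ (for open $\CU$) are $\CU$-indexed tuples $(m_x)$ satisfying the compatibility $m_x|_{\le y} = m_y$ whenever $y \le x$ both lie in $\CU$. Injectivity is then transparent: if $m = (m_x)_{x \in \CJ}$ satisfies $m|_{\CJ_i} = 0$ for all $i$, then $m_x = 0$ for every $x \in \CJ_i$ and every $i$, and hence for every $x \in \CJ = \bigcup_i \CJ_i$. Containment of the image in $R$ is equally direct, by transitivity of restriction: for $m \in M^\CJ$, both $(m|_{\CJ_i})|_{\CJ_i \cap \CJ_j}$ and $(m|_{\CJ_j})|_{\CJ_i \cap \CJ_j}$ coincide with $m|_{\CJ_i \cap \CJ_j}$.

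The substantive direction is that every element of $R$ arises this way. Given a compatible family $(m_i) \in R$, I would define $m_x := (m_i)_x \in M^{\le x}$ for any $i$ with $x \in \CJ_i$; reading the compatibility condition componentwise at an $x \in \CJ_i \cap \CJ_j$ yields $(m_i)_x = (m_j)_x$, so $m_x$ is independent of the choice of $i$. This produces a well-defined $\CJ$-tuple $(m_x)_{x \in \CJ}$ with $m_x \in M^{\le x}$.

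It then remains to check that this tuple defines an element of $M^\CJ$, i.e.\ that $m_x|_{\le y} = m_y$ whenever $y \le x$ both lie in $\CJ$; once this is established, the identity $m|_{\CJ_i} = m_i$ for every $i$ is tautological. This is the point where the topology enters: each $\CJ_i$ is open, which by definition means downward closed, so if $x \in \CJ_i$ then also $y \in \CJ_i$. Picking such an $i$, the desired equality reduces to the compatibility $(m_i)_x|_{\le y} = (m_i)_y$ already valid inside $m_i \in M^{\CJ_i}$. I expect this reduction to a single open piece to be the only nonformal step: without the downward closure of the $\CJ_i$ one could not transport the relation $y \le x$ into a single $\CJ_i$, and everything else is routine bookkeeping with products and projection maps.
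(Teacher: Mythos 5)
Your proof is correct and follows essentially the same route as the paper's: injectivity by componentwise vanishing, containment in $R$ by transitivity of restriction, and for surjectivity the same construction $m_x := (m_i)_x$ with well-definedness from the compatibility in $R$ and the verification that $(m_x)$ lies in $M^\CJ$ hinging on the down-closure of each $\CJ_i$. The only cosmetic difference is at the very end, where the paper deduces $m|_{\CJ_i}=m_i$ by invoking the already-proved injectivity applied to $\CJ_i$, while you observe it is immediate from the componentwise construction; both are fine.
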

\begin{proof}  As a first step we show that the claimed injectivity holds. So suppose that $m=(m_x)\in M^{\CJ}$ is mapped to $0$. Let $y$ be an element in $\CJ$. Then there is some $i\in\CI$ with $y\in\CJ_i$. Then $m|_{\CJ_i}=0$ implies $m_y=m|_{\le y}=0$. As this holds for all $y\in\CJ$ we deduce  $m=0$. 

 It is clear that the image is contained in  $R$. So let $(m_i)$ be in $R$. For $x\in\CJ$ choose $i\in I$ with $x\in\CJ_i$ and set $m_x:= m_i|_{\le x}$. By definition of $R$, $x\in\CJ_i\cap\CJ_j$ implies $m_i|_{\le x}=m_j|_{\le x}$, so $m_x$ is independent of the choice of $i$.  If $z\le x$, then $z\in\CJ_i$ and $m_z=m_i|_{\le z}=(m_i|_{\le x})|_{\le z}=m_x|_{\le z}$. Hence there is some $m\in M^{\CJ}$ with $m|_{\le x}=m_x$. Now $m|_{\CJ_i}$ and $m_i$ are elements in $M^{\CJ_i}$ with the property $(m|_{\CJ_i})|_{\le x}=m_i|_{\le x}$ for all $x\in\CJ_i$. The injectivity statement, applied to $\CJ_i$ instead of $\CJ$, yields $m|_{\CJ_i}=m_i$. 
\end{proof}

\subsection{Subquotients}
Let $M$ be a $(\CV,\le)$-cofiltered object in $\CA$. For $x\in\CV$ we denote by $M_{[x]}\subset M^{\le x}$ the kernel of the restriction homomorphism $M^{\le x}\to M^{<x}$. This is the subobject of $M^{\le x}$ that contains all  $m$ with $m|_{\le y}=0$ for all $y<x$. 

\begin{definition} We define the {\em support of $M$} as 
$$
\supp_{\le}\, M:=\{x\in\CV\mid M_{[x]}\ne 0\}.
$$
\end{definition}

The following is an easy consequence of the definitions.
\begin{lemma} Let $\CJ\subset\CV$ be an open set with maximal element $x$. Then the restriction homomorphism $M^\CJ\to M^{\le x}$ induces an isomorphism 
$$
\ker(M^{\CJ}\to M^{\CJ\setminus\{x\}})\cong M_{[x]}.
$$
\end{lemma}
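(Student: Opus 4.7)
The plan is to unravel both sides in terms of the compatible tuple description given by Lemma \ref{lemma-admcofilt} and then write down an explicit inverse. An element of $M^\CJ$ is a compatible tuple $(m_w)_{w \in \CJ}$ with $m_w \in M^{\le w}$ and $m_w|_{\le y} = m_y$ whenever $y \le w$ in $\CJ$. Since $\CJ$ is open and contains $x$, we have $\{\le x\}\subset \CJ$, and with the identification $M^{\{\le x\}} = M^{\le x}$ the restriction homomorphism $M^\CJ \to M^{\le x}$ is simply projection onto the $x$-component.

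A preliminary step is to verify that $\CJ\setminus\{x\}$ is again open: if $w \in \CJ\setminus\{x\}$ and $y\le w$, then $y \in \CJ$ by openness of $\CJ$, and $y\ne x$ since otherwise $x \le w \in \CJ$ with $w \ne x$ would contradict the maximality of $x$ in $\CJ$. In particular $\{< x\}\subset \CJ\setminus\{x\}$, and hence $M^{\CJ\setminus\{x\}}\to M^{<x}$ is defined by restriction.

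Next I would establish that the restriction homomorphism indeed lands in $M_{[x]}$ and is injective on the kernel. An element $(m_w)$ of $\ker(M^\CJ\to M^{\CJ\setminus\{x\}})$ has $m_w = 0$ for every $w \ne x$, so the tuple is determined by $m_x \in M^{\le x}$ (giving injectivity). For $y < x$ we have $y\in\CJ$, so compatibility forces $m_x|_{\le y} = m_y = 0$; assembling these gives $m_x|_{< x} = 0$ in $M^{<x}$, i.e.\ $m_x \in M_{[x]}$.

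For surjectivity, given $m \in M_{[x]}$, I would set $m_x := m$ and $m_w := 0$ for $w \in \CJ$, $w \ne x$, and check compatibility by cases. If $w = x$ and $y \le x$, then $m_x|_{\le y} = m|_{\le y}$ equals $0 = m_y$ for $y < x$ (because $m \in M_{[x]}$) and equals $m = m_y$ for $y = x$. If $w \ne x$ and $y \le w$ with $y \in \CJ$, then $y \ne x$ by the maximality argument above, so both $m_w|_{\le y}$ and $m_y$ are $0$. This produces an element of $M^\CJ$ in the kernel whose $x$-component is $m$, closing the proof. Nothing in this argument is genuinely hard; the only non-formal input is the use of the maximality of $x$ to keep $\CJ\setminus\{x\}$ open and to make the compatibility condition trivial away from $x$.
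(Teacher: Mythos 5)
Your proof is correct and is exactly the unwinding-of-definitions argument the paper has in mind (the paper just states the lemma is "an easy consequence of the definitions" and omits the verification). Identifying the restriction map with projection onto the $x$-component, observing that kernel elements vanish in all other components, and invoking maximality of $x$ both to keep $\CJ\setminus\{x\}$ open and to make the compatibility check trivial away from $x$ — that is precisely the intended argument, and you carry it out completely.
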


\subsection{Flabby cofiltered objects}
Note that we probably should have called a cofiltered object rather a  {\em direct system of objects} in $\CA$. Only the properties stated in the following lemma justify the name {\em cofiltration}. 
\begin{lemma} \label{lemma-surj} The following are equivalent for a $(\CV,\le)$-cofiltered object $M$.
\begin{enumerate}
\item The restrictions $M^{\le w}\to M^{<w}$ are surjective for all $w\in\CV$. 
\item The restrictions $M^{\CJ}\to M^{\CJ\setminus\{w\}}$ are surjective for all open subsets $\CJ$ of $\CV$ with maximal element $w\in\CJ$.
\item The restrictions $M^{\CJ}\to M^{\CJ^\prime}$ are surjective for all  open subsets $\CJ^\prime\subset \CJ$ of $\CV$.
\end{enumerate}
\end{lemma}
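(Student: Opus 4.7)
The plan is to establish the cycle $(3)\Rightarrow (2)\Rightarrow (1)\Rightarrow (2)\Rightarrow (3)$. The first two arrows are essentially tautological: for $(3)\Rightarrow (2)$, I would note that if $w$ is maximal in the open set $\CJ$, then $\CJ\setminus\{w\}$ is still open (if $y\le z$ with $z\in\CJ\setminus\{w\}$, then $y\in\CJ$ by openness of $\CJ$, and $y=w$ would force $w\le z\ne w$, contradicting maximality), so $(2)$ is a special case of $(3)$. For $(2)\Rightarrow (1)$, it suffices to specialize to $\CJ=\{\le w\}$, which is open with maximum $w$ and whose complement of $w$ is exactly $\{<w\}$.

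The substance is in $(1)\Rightarrow (2)\Rightarrow (3)$. For $(1)\Rightarrow (2)$, let $\CJ$ be open with maximal element $w$ and let $n\in M^{\CJ\setminus\{w\}}$. Openness of $\CJ$ together with $w\in\CJ$ gives $\{<w\}\subseteq\CJ$, and the maximality of $w$ forces $\{<w\}\subseteq\CJ\setminus\{w\}$; restriction therefore yields a well-defined element $n|_{<w}\in M^{<w}$. By $(1)$ there is a lift $\tilde m\in M^{\le w}$ with $\tilde m|_{<w}=n|_{<w}$. I would then assemble $\tilde n\in M^\CJ$ by setting $\tilde n_w:=\tilde m$ and $\tilde n_x:=n_x$ for $x\in\CJ\setminus\{w\}$. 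The compatibility relations required to land in $M^\CJ$ reduce, for pairs $y\le x$ both in $\CJ\setminus\{w\}$, to the fact that $n$ is a section on $\CJ\setminus\{w\}$, and for pairs $y<w$ to the defining property of the lift $\tilde m$.

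For $(2)\Rightarrow (3)$, I would run Zorn's lemma on the poset of pairs $(\CU,n)$ with $\CJ'\subseteq\CU\subseteq\CJ$ open and $n\in M^\CU$ restricting to the prescribed $m\in M^{\CJ'}$, ordered by $(\CU_1,n_1)\le (\CU_2,n_2)$ iff $\CU_1\subseteq\CU_2$ and $n_2|_{\CU_1}=n_1$. The poset is nonempty (it contains $(\CJ',m)$), and chains have upper bounds: the sections in a chain are pairwise compatible on intersections, so Lemma~\ref{lemma-admcofilt} glues them into a section over the union. Given a maximal element $(\CU_*,n_*)$, if $\CU_*\subsetneq\CJ$ I would pick a minimal element $x$ of $\CJ\setminus\CU_*$; openness of $\CJ$ and minimality of $x$ force $\{<x\}\subseteq\CU_*$, so $\CU_*\cup\{x\}$ is open with $x$ as a maximal element above $\CU_*$, and $(2)$ then extends $n_*$ across $x$, contradicting maximality.

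The main obstacle is the extraction of a minimal element of $\CJ\setminus\CU_*$ in the last step, which amounts to a well-foundedness requirement on $(\CV,\le)$. In the motivating setting of this paper, where $\CV$ is the alcove set $\SA$ with the generic Bruhat order and every principal downset $\{\le A\}$ is finite, such minimal elements automatically exist. In greater generality one should either impose a descending chain condition on $(\CV,\le)$ or replace Zorn's lemma by an explicit transfinite induction along a well-ordering of $\CJ\setminus\CJ'$, again using Lemma~\ref{lemma-admcofilt} to handle the gluing step at limit ordinals.
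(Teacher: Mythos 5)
Your treatment of $(1)\Rightarrow(2)$ is exactly the paper's argument (lift over $w$, patch with the restriction of the given section on $\CJ\setminus\{w\}$), and $(3)\Rightarrow(2)\Rightarrow(1)$ is handled the same way by both of you. The interesting comparison is $(2)\Rightarrow(3)$. The paper disposes of it with the one-line remark that one obtains $\CJ'$ from $\CJ$ by removing, step by step, a maximal element; you instead run a Zorn argument (gluing along chains via Lemma~\ref{lemma-admcofilt}) and correctly observe that the extension step needs a minimal element of $\CJ\setminus\CU_*$, i.e.\ a well-foundedness hypothesis on $(\CV,\le)$. Your caution is warranted: if $\CJ\setminus\CJ'$ is infinite, the paper's ``step by step'' removal takes transfinitely many steps and condition~(2) alone gives no control at limit stages. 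The implication genuinely fails without an extra hypothesis: take $\CV=\{\star\}\cup\DZ_{\le 0}$ with $\star$ below everything and $\DZ_{\le 0}$ linearly ordered, $M^{\le n}=k$ for $n\in\DZ_{\le 0}$ with identity restrictions, $M^{\le\star}=k^2$ with each restriction $M^{\le n}\to M^{\le\star}$ given by $x\mapsto(x,0)$. One checks that (1) and (2) hold, but $M^{\CV}\cong k$ does not surject onto $M^{\{\star\}}=M^{\le\star}=k^2$. So you are more careful than the paper here, and the condition you flag is a real one.

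Where you go wrong is the final paragraph. The principal downsets $\{\preceq A\}$ of the generic Bruhat order are \emph{not} finite, and the order is in fact not well-founded. Already in type $A_1$, the alcoves are the intervals $(n,n+1)$ for $n\in\DZ$, and the generating relations of the paper give $(m,m+1)\preceq(m',m'+1)$ precisely when $m\le m'$; thus $(\SA,\preceq)\cong(\DZ,\le)$, so every downset is infinite and there are infinite strictly descending chains. The gap you identify is therefore genuinely present in the motivating example and cannot be dismissed as you attempt. The correct observation is rather that all cofiltered objects the paper actually constructs (the $\SB(w)$, $B(w)$, $\Delta(w)$) have support contained in some upset $\{\succeq w\}$, and one should either verify well-foundedness for the order restricted to such upsets (in type $A_1$, $\{\succeq A\}\cong\DN$) or add an explicit well-foundedness hypothesis to the lemma and track where it is invoked.
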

\begin{proof} Clearly, (3) implies (2) and (2) implies (1). If we are given a pair $\CJ^\prime\subset\CJ$ of open subsets of $\CV$ we obtain $\CJ^\prime$ from $\CJ$ by removing, step by step, a maximal element. Hence (2) and (3) are equivalent. So we only have to show that (1) implies (2). 

So let $\CJ$ be an open subset of $\CV$ and let $\CJ^\prime=\CJ\setminus\{w\}$ for a maximal element $w\in\CJ$. 
Let $m^\prime \in M^{\CJ^\prime}$. Then there is an element $\tilde m\in M^{\le w}$ that is a preimage of $m^\prime|_{<w}$. We define $m=(m_x)\in\prod_{x\in\CJ} M^{\le x}$ by $m_x=m^\prime|_{\le x}$ for $x\ne w$ and $m_w=\tilde m$. It is easy to see that for $x,y\in\CJ$ and $y\le x$ we have $m_x|_{\le y}=m_y$. Hence $m$ defines an element in $M^{\CJ}$ which is, by construction, a preimage of $m^\prime$.
\end{proof}

\begin{definition} We say that the $(\CV,\le)$-cofiltered object $M$ is {\em flabby} if the equivalent conditions in Lemma \ref{lemma-surj} hold.
\end{definition}

\subsection{Cofiltered $\lCZ$-modules}
We now assume that $\CA$ is the category of graded $\lCZ$-modules and we consider $(\CV,\le)$-cofiltered $\lCZ$-modules.  Let $M$ be a $(\CV,\le)$-cofiltered $\lCZ$-module.  Here is an additional assumption on $M$. We call it the {\em support condition}. Note that we have now two notions of support for $M$: one comes from the cofiltration and is denoted by $\supp_\le\, M$, the other comes from the $\lCZ$-module structure and is denoted by $\supp_{\lCZ} M$.

\begin{itemize}
\item[(S)]  For each $\Theta\in\lCV$ and $x\in\Theta$, the support of the $\lCZ$-module $M_{[x]}$ (cf. Definition \ref{def-suppZmod}) is contained in  $\Theta$.  
\end{itemize}

Recall that we define for each $\lCZ$-module $N$ the generic decomposition $N\subset \bigoplus _{\Omega\in\lCV}N^{\Omega}$ in Section \ref{subsec-gendec}. Then condition (S) means that we have  
$$
(M_{[x]})^\Omega=\begin{cases}
M_{[x]},&\text{ if $x\in\Omega$}\\
0,&\text{ if $x\not\in\Omega$}.
\end{cases}
$$
In particular, the canonical homomorphism $M_{[x]}\to M^{\le x}\to (M^{\le x})^\Omega$ is injective, and
$$
(M^{\le x})^\Theta=(M^{<x})^{\Theta}
$$
if $x\not\in\Theta$.

\begin{definition}
\begin{itemize}
\item  We denote by $\lCZ\catmod^{\le}$ the category of $(\CV,\le)$-cofiltered graded $\lCZ$-modules $M$ that satisfy the following assumptions:
\begin{itemize}
\item For every $x\in\CV$ the object $M^{\le x}$ is torsion free over $S$ and finitely generated over $S$. 
\item $M$ is is flabby and satisfies condition (S).
\end{itemize}  
\item   
We denote by $\CC^{\refl}$ the full subcategory of $\lCZ\catmod^{\le}$ that contains  all objects that are reflexive, i.e. that have the property that for each open subset $\CJ$ of $\CV$ the $S$-module $M^\CJ$ is reflexive.
\item   
We denote by $\CC$ the full subcategory of $\lCZ\catmod^{\le}$ that contains  all objects that {\em admit a Verma flag}, i.e. that have the property that for each open subset $\CJ$ of $\CV$ the $S$-module $M^\CJ$ is graded free.

\end{itemize}
 \end{definition} 

For any $w\in\CV$ we define the {\em standard object} $\Delta(w)$ in the category $\CC^{\refl}$ as follows: Suppose that $w\in\Theta$. Let $\lDelta(w)$ be the $\lCZ$-module that is free of graded rank $1$ as an $S$-module and on which $(z_\Omega)$ acts as multiplication with $z_\Theta$. Then we set

$$
\Delta(w)^{\le x}=\begin{cases}
\lDelta(w),&\text{ if $w\le x$},\\
0,&\text{ else}
\end{cases}
$$
with restriction maps 
$$
\left(\Delta(w)^{\le x}\to\Delta(w)^{\le y}\right)=\begin{cases}
\id_{\lDelta(w)},&\text{ if $w\le x$, $w\le y$},\\
0,&\text{ else.}
\end{cases}
$$
It is clear that $\Delta(w)$ is flabby and satisfies condition (S).

\subsection{The Fitting decomposition and the Krull-Schmidt theorem}

Let $M$ be an object in $\lCZ\catmod^{\le}$ and $f\colon M\to M$  a morphism in $\lCZ\catmod^{\le}$. Then $M^{\le x}$ is a finitely generated $\DZ$-graded $S$-module, so each homogeneous component is a finite dimensional $k$-vector space. We obtain a Fitting decomposition
$$
M^{\le x}=\ker^\infty f^{\le x}\oplus\im^{\infty} f^{\le x},
$$
where  
\begin{align*}
 \ker^\infty f^{\le x}&:=\{m\in M^{\le x}\mid (f^{\le x})^n(m)=0\text{ for some $n\gg 0$}\}=\bigcup_{n\in\DN}\ker (f^{\le x})^n,\\
 \im^\infty f^{\le x}&:=\bigcap_{n\in\DN}\im (f^{\le x})^n.
\end{align*}
Clearly, $\ker^\infty f^{\le x}$ and $\im^\infty f^{\le x}$ are sub-$\lCZ$-modules of $M^{\le x}$. The functoriality of this decomposition assures that the above data defines a direct sum decomposition $M=\ker^\infty f\oplus\im^\infty f$ of $(\CV,\le)$-cofiltered $\lCZ$-modules and even of objects in $\lCZ\catmod^{\le}$. From this  decomposition and the fact that $M$ is finitely generated as an $S$-module one deduces the Krull-Schmidt Theorem for $\lCZ\catmod^{\le}$ with standard arguments:

\begin{theorem}\label{thm-KrullSchm} 
\begin{enumerate}
\item Each $M\in\lCZ\catmod^{\le}$ admits an up to isomorphism and rearrangement unique decomposition into indecomposables.
\item Each endomorphism of an indecomposable object $M$ in $\lCZ\catmod^{\le}$ is either an automorphism or nilpotent. In particular, $\End_{\lCZ\catmod^{\le}}(M)$ is a local ring. 
\end{enumerate}
\end{theorem}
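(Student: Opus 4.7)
The plan is to exploit the Fitting decomposition established in the text preceding the theorem and then apply the standard Azumaya--Krull--Schmidt argument.

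First, I would verify that the pointwise Fitting decomposition $M^{\le x}=\ker^\infty f^{\le x}\oplus\im^\infty f^{\le x}$ assembles into a genuine decomposition inside the category $\lCZ\catmod^{\le}$. Since each restriction $M^{\le x}\to M^{\le y}$ commutes with $f$, it preserves $\ker (f^{\le x})^n$ and $\im (f^{\le x})^n$ for every $n$, hence also their stable versions. Noetherianity of $S$ together with the finite generation of $M^{\le x}$ over $S$ ensures that the chain $\ker(f^{\le x})^n$ stabilises at some $N_x$; then $f^{\le x}$ acts nilpotently on $\ker^\infty f^{\le x}$ and as an automorphism on $\im^\infty f^{\le x}$, giving the claimed internal direct sum. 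Torsion-freeness and finite generation over $S$ pass to submodules; flabbiness descends to each summand because the surjection $M^{\le w}\to M^{<w}$ splits as the direct sum of the corresponding maps on $\ker^\infty f$ and $\im^\infty f$, and each component of a surjective direct sum map is itself surjective; finally, condition (S) is preserved because $(\ker^\infty f)_{[x]}$ and $(\im^\infty f)_{[x]}$ are sub-$\lCZ$-modules of $M_{[x]}$ and therefore have $\lCZ$-support contained in that of $M_{[x]}$.

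Next, I would deduce that every endomorphism $f$ of an indecomposable object $M\in\lCZ\catmod^{\le}$ is either nilpotent or an automorphism. Indecomposability of $M$ forces either $\ker^\infty f=0$ or $\im^\infty f=0$. In the first case every $f^{\le x}$ is injective with $\im^\infty f^{\le x}=M^{\le x}$, so $f^{\le x}$ is an isomorphism for every $x\in\CV$, whence $f$ itself is an automorphism. In the second case $f^{\le x}$ is nilpotent for every $x$; since $\CV$ is finite, $N=\max_{x\in\CV} N_x$ is finite and one obtains $f^N=0$. Consequently the non-invertible endomorphisms of $M$ form a two-sided ideal, so $\End_{\lCZ\catmod^{\le}}(M)$ is local.

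For the existence of a decomposition into finitely many indecomposables I would argue that $M^{\CV}$ is finitely generated and torsion-free over $S$: finite generation follows from the flabby gluing of Lemma \ref{lemma-admcofilt} combined with the finiteness of $\lCG$ and the Noetherianity of $S$. The generic rank $\rk_S M^{\CV}$ is therefore a finite integer, and any direct sum decomposition of $M$ induces one of $M^{\CV}$ in which each non-zero summand contributes at least $1$ to the generic rank. Hence any refinement process terminates after finitely many steps and yields a decomposition of $M$ into indecomposables. Uniqueness up to isomorphism and reordering then follows from the classical Azumaya--Krull--Schmidt argument, which relies only on locality of the endomorphism rings of the summands. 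The hard part of all this is the first step: checking simultaneously that the Fitting decomposition is compatible with the restriction maps and that both summands inherit flabbiness and condition (S); everything afterwards is essentially module-theoretic bookkeeping.
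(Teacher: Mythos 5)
Your overall plan matches the paper's: establish the Fitting decomposition on the level of cofiltered $\lCZ$-modules and then invoke the standard Azumaya--Krull--Schmidt machinery. The compatibility checks (restriction maps preserve $\ker^\infty$ and $\im^\infty$, flabbiness and condition (S) descend to summands) are exactly what the paper means by ``the functoriality of this decomposition.'' However, there are two genuine gaps.

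The more serious one is the justification for the Fitting decomposition itself. You appeal to Noetherianity of $S$ and finite generation of $M^{\le x}$ to conclude that the ascending chain $\ker (f^{\le x})^n$ stabilises, and then assert that this yields the internal direct sum $M^{\le x}=\ker^\infty f^{\le x}\oplus\im^\infty f^{\le x}$. That deduction is false in general: stabilisation of the kernel chain gives you $\ker^\infty f^{\le x}\cap\im^\infty f^{\le x}=0$, but for the sum to be all of $M^{\le x}$ you also need the descending chain $\im(f^{\le x})^n$ to stabilise, and Noetherianity gives no control over that. (Consider multiplication by $2$ on $\DZ$: the kernel chain is constantly $0$, but $\bigcap_n 2^n\DZ=0$, so the ``Fitting decomposition'' would be $0\oplus 0\neq\DZ$.) The paper's argument uses something else: $M^{\le x}$ is a $\DZ$-graded module whose homogeneous components are finite-dimensional $k$-vector spaces, and $f^{\le x}$ is a degree-$0$ morphism, so the Fitting decomposition exists degree by degree (finite length over $k$ on each component), and the graded pieces assemble into $\ker^\infty f^{\le x}\oplus\im^\infty f^{\le x}$. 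This is the point where the grading is essential, and your write-up replaces it with an insufficient Noetherianity argument.

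The second issue is your assertion ``since $\CV$ is finite, $N=\max_{x\in\CV}N_x$ is finite.'' The standing assumption in the paper is that the \emph{quotient} graph $\lCG$ is finite, i.e.\ $\lCV$ is finite; the vertex set $\CV$ of $\CG$ can be (and in the motivating periodic example \emph{is}) infinite. So one cannot take a maximum over $\CV$ to deduce $f^N=0$. This is why the locality of $\End(M)$ should be argued directly from the dichotomy ``$\ker^\infty f=0$ or $\im^\infty f=0$'' (equivalently, each $f^{\le x}$ is an isomorphism or each $f^{\le x}$ is nilpotent), rather than from uniform nilpotency. For the same reason, the finiteness of $\rk_S M^{\CV}$ that you invoke to terminate the refinement process is not automatic and deserves an argument, e.g.\ by bounding the number of indecomposable summands via any fixed nonzero stalk $M^{\le x}$, which \emph{is} a finitely generated $S$-module by the definition of $\lCZ\catmod^{\le}$.
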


\subsection{The extension spaces $M^{\delta x}$}
Let $M$ be an object in $\lCZ\catmod^{\le}$.
For any $x\in\CV$ we define a $\lCZ$-module $M^{\delta x}$ as follows. Suppose that $x\in\Omega\in\lCV$. Let $M^{\le x}\to (M^{\le x})^\Omega$ be the projection onto the $\Omega$-component. Then, by the support condition, the composition
$$
M_{[x]}\to M^{\le x}\to (M^{\le x})^\Omega
$$
is injective. We define
$$
M^{\delta x}:= (M^{\le x})^\Omega/M_{[x]}.
$$
We then have natural homomorphisms
\begin{align*}
d_x\colon (M^{\le x})^\Omega&\to M^{\delta x},\\
u_x\colon M^{<x}=M^{\le x}/M_{[x]}&\to M^{\delta x}.
\end{align*}

\begin{lemma}\label{lemma-extspac} The homomorphism $M^{\le x}\to (M^{\le x})^\Omega\oplus M^{<x}$,  $m\mapsto (m_\Omega, m|_{<x})$,  is injective with image
$R=\{(m_\Omega,m^\prime)\mid d_x(m_\Omega)=u_x(m^\prime)\}$.
\end{lemma}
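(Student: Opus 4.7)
My plan is to verify injectivity, containment of the image in $R$, and then surjectivity onto $R$ as three separate claims, each exploiting the support condition (S) together with flabbiness, which together give the rigidity of the decomposition around the vertex $x$.

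First I would prove injectivity. Suppose $m \in M^{\le x}$ has $m_\Omega = 0$ and $m|_{<x} = 0$. The second condition places $m$ in $M_{[x]}$. Condition (S) asserts that $M_{[x]}$ is supported on $\Omega$, so the canonical map $M_{[x]} \hookrightarrow M^{\le x} \to (M^{\le x})^\Omega$ is injective; since the image of $m$ under this composition is precisely $m_\Omega = 0$, we conclude $m = 0$. Next, for any $m \in M^{\le x}$, the identity $d_x(m_\Omega) = u_x(m|_{<x})$ holds tautologically: both sides are the class in $M^{\delta x} = (M^{\le x})^\Omega / M_{[x]}$ of $m_\Omega$ (for $u_x$, note that $m$ is a lift of $m|_{<x}$ through the surjection $M^{\le x} \sur M^{<x}$ guaranteed by flabbiness, and $u_x(m|_{<x})$ is by definition the class of its $\Omega$-component). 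So the image is contained in $R$.

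For surjectivity onto $R$, I would take $(m_\Omega, m^\prime) \in R$ and use flabbiness to choose a lift $\tilde m \in M^{\le x}$ of $m^\prime$. The compatibility $d_x(m_\Omega) = u_x(m^\prime) = d_x(\tilde m_\Omega)$ says that $m_\Omega - \tilde m_\Omega$ maps to zero in $M^{\delta x}$, so it lies in the image of $M_{[x]} \hookrightarrow (M^{\le x})^\Omega$. Choosing $n \in M_{[x]} \subset M^{\le x}$ with $n_\Omega = m_\Omega - \tilde m_\Omega$ (using the injectivity from (S) to identify $M_{[x]}$ with its image), the element $m := \tilde m + n$ has $\Omega$-component $\tilde m_\Omega + (m_\Omega - \tilde m_\Omega) = m_\Omega$ and restricts to $\tilde m|_{<x} + 0 = m^\prime$ since $n \in M_{[x]}$ restricts to $0$ below $x$.

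I expect the step that genuinely uses the hypotheses rather than formal bookkeeping to be the surjectivity argument, and the only subtle point there is the identification of $M_{[x]}$ with a subobject of $(M^{\le x})^\Omega$; but this is furnished precisely by the displayed consequence of (S), namely $(M_{[x]})^\Omega = M_{[x]}$. Everything else is a diagram chase.
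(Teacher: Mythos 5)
Your proof is correct and takes essentially the same approach as the paper's: both use condition (S) to get injectivity of $M_{[x]}\to(M^{\le x})^\Omega$, verify the tautological commutativity $d_x(m_\Omega)=u_x(m|_{<x})$, and for surjectivity lift $m'$ via flabbiness and correct by an element of $M_{[x]}$ (the paper phrases this as ``WLOG $m'=0$'', which is the same manoeuvre).
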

\begin{proof} We first show that the homomorphism is indeed injective. Suppose that $m\in M^{\le x}$ is in its kernel. Then $m|_{<x}$ is zero, hence $m\in M_{[x]}$. Now the claim follows from the injectivity of the composition $M_{[x]}\to M^{\le x}\to (M^{\le x})^\Omega$. 

That the image of the homomorphism in the lemma is contained in $R$ follows from the fact that the diagram

\centerline{
\xymatrix{
&(M^{\le x})^\Omega\ar[dr]&\\ 
M^{\le x}\ar[ur]\ar[dr]&& M^{\delta x}\\
&M^{<x}\ar[ur]&
}
}
\noindent
commutes. In order to finish the proof, we have to show that for each $(m_\Omega,m^\prime)$ with $d_x(m_\Omega)=u_x(m^\prime)$ there is an $m\in M^{\le x}$ with $\Omega$-component $m_\Omega$ and whose restriction to $\{<x\}$ is $m^\prime$. As $M^{\le x}\to M^{<x}$ is surjective, we can assume, without loss of generality, that $m^\prime=0$. But then $d_x(m_\Omega)=0$, which implies that $m_\Omega\in M_{[x]}\subset M^{\le x}$, which finishes the proof. 
\end{proof}

\subsection{An extension result}
The following is a rather technical result that we use later on.
\begin{lemma}\label{lemma-locext} Let $M\in\CC^{\refl}$, let $\Theta$ be a vertex of $\lCG$ and let $x\in\Theta$. Suppose that  $m^\prime\in M^{<x}$ satisfies  $m^\prime_\Theta=0$ and $m^\prime_\Omega\in\alpha (M^{<x})^{\Omega}$ if $\Omega$ is connected to $\Theta$ by an edge with label $\alpha$.  Then $u_x(m^\prime)=0$.
\end{lemma}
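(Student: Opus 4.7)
The plan is to construct, starting from any lift $\widetilde m\in M^{\le x}$ of $m'$ (available by flabbiness), the assertion that the $\Theta$-component $\widetilde m_\Theta\in(M^{\le x})^\Theta$ already lies in $M_{[x]}$, so that $u_x(m')=d_x(\widetilde m_\Theta)=0$ by the very definition of $d_x$. First I would observe, using the exactness of localization together with the support condition~(S), that the map $(M^{\le x})^\Theta\to(M^{<x})^\Theta$ is surjective with kernel $(M_{[x]})^0$; since $\widetilde m_\Theta$ restricts to $m'_\Theta=0$, we obtain $\widetilde m_\Theta\in(M_{[x]})^0$. The task is then reduced to the integrality claim $\widetilde m_\Theta\in M^{\le x}$, for combined with the vanishing on $\{<x\}$ and with~(S) it would then automatically belong to $M_{[x]}$.

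To prove integrality I would exploit the reflexivity of $M^{\le x}$: it suffices to verify $\widetilde m_\Theta\in(M^{\le x})_\fp$ at every height-one prime $\fp$ of $S$. If $\fp$ is not the ideal of any edge label of $\lCG$, every such label is a unit in $S_\fp$, so $\lCZ_\fp$ splits as $\prod_{\Omega\in\lCV}S_\fp$ and contains the vertex idempotent $e_\Theta$; then $\widetilde m_\Theta=e_\Theta\cdot\widetilde m\in(M^{\le x})_\fp$ with no further input.

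The interesting case is $\fp=(\alpha)$ for an edge label $\alpha$. Here $\lCZ_\fp$ decomposes along the $\alpha$-connected components $C$ of $\lCG$, and writing $C^\ast$ for the component containing $\Theta$, the question becomes whether the tuple $\xi$ with $\xi_\Theta=\widetilde m_\Theta$ and zero elsewhere lies in the summand $(M^{\le x})_{\fp,C^\ast}$. For each $\Omega\in C^\ast$ joined to $\Theta$ by an $\alpha$-edge, the $S$-linear isomorphism $(M^{\le x})^\Omega\cong(M^{<x})^\Omega$ provided by~(S) (valid since $\Omega\neq\Theta$) transfers the hypothesis $m'_\Omega\in\alpha(M^{<x})^\Omega$ into $\widetilde m_\Omega\in\alpha(M^{\le x})^\Omega$. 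Since $\widetilde m\in M^{\le x}\subset(M^{\le x})_{\fp,C^\ast}$ must respect the edge congruence $\widetilde m_\Theta\equiv\widetilde m_\Omega\pmod{\alpha}$ across this edge, the $\alpha$-divisibility of $\widetilde m_\Omega$ yields $\widetilde m_\Theta\equiv 0\pmod{\alpha}$; this is precisely the edge compatibility needed, while all remaining $\alpha$-edges of $C^\ast$ involve only zero entries of $\xi$ and impose no constraint.

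The main obstacle is making rigorous the final claim, namely that membership of a tuple in $(M^{\le x})_{\fp,C^\ast}$ is detected by precisely these edge congruences along $\alpha$-edges. Such a local edge-criterion for the reflexive $\lCZ$-module $M^{\le x}$ should follow by combining its reflexivity with the very definition of $\lCZ$ as the subalgebra of $\prod_{\Omega}S$ cut out by edge congruences; once that input is in place, the argument above closes.
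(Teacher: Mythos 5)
Your reduction is correct and in fact equivalent to the paper's: writing $\widetilde m\in M^{\le x}$ for a flabbiness lift of $m'$ and $\xi$ for the tuple with $\xi_\Theta=\widetilde m_\Theta$ and $\xi_\Omega=0$ otherwise, your integrality claim ``$\xi\in M^{\le x}$'' is the same assertion as the paper's claim that the tuple $m$ with $m_\Theta=0$, $m_\Omega=m'_\Omega$ lies in $M^{\le x}$ (indeed $\xi=\widetilde m-m$). You also correctly invoke reflexivity to reduce to a codimension-one verification, and the easy case where $\alpha\notin\fp$ is fine. So the architecture is right.

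The gap you flag at the end is real, however, and it is not a formality. You want to deduce ``$\xi$ lies in the $\alpha$-connected-component summand of $(M^{\le x})^L$'' from a pointwise edge congruence $\widetilde m_\Theta\equiv 0\pmod\alpha$. Two problems: first, the congruence ``$\widetilde m_\Theta\equiv\widetilde m_\Omega\pmod\alpha$'' is not literally meaningful, since $\widetilde m_\Theta$ and $\widetilde m_\Omega$ live in different $S$-modules $(M^{\le x})^\Theta$ and $(M^{\le x})^\Omega$; an edge congruence is a condition on elements of $\lCZ\subset\prod_\Omega S$, not a condition that an arbitrary reflexive $\lCZ$-module inherits. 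Second, even if one could make sense of $\widetilde m_\Theta\in\alpha(M^{\le x})^\Theta$, the conclusion that the tuple $(\widetilde m_\Theta,0)$ lies in the summand requires precisely the ``local edge-criterion'' for $(M^{\le x})^{L,C^\ast}$ that you acknowledge you cannot justify; such a criterion amounts to a structure theorem for reflexive modules over the local structure algebra $\{(a,b)\mid a\equiv b\bmod\alpha\}$, which is neither stated nor proved in this paper. The paper sidesteps the edge-criterion entirely with a short module-theoretic manipulation: by the GKM condition the $\alpha$-component $C^\ast$ of $\Theta$ is $\{\Theta,\Gamma\}$ (or just $\{\Theta\}$, in which case there is nothing to do); the hypothesis gives $m'_\Gamma=\alpha\beta$ for some $\beta\in(M^{<x})^\Gamma=(M^{\le x})^\Gamma$; one then chooses $\widetilde n\in((M^{\le x})^L)'$ with $\Gamma$-component $\beta$ (possible since $\beta$ is the $\Gamma$-component of some global element of $M^{\le x}$, projected to the $C^\ast$-summand), and hits it with the element $(0,\alpha)$ of the local structure algebra, producing exactly the tuple $(0,m'_\Gamma)$ inside the summand. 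This argument uses only the $\lCZ$-module structure, not any internal description of the summand by congruences, and is where your proposal needs to be repaired.
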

\begin{proof} By Lemma \ref{lemma-extspac} the statement $u_x(m^\prime)=0$ is equivalent to the statement that there is an element $m\in M^{\le x}$ with $m_\Theta=0$ and $m|_{<x}=m^\prime$. Let us try define $m=(m_\Omega)\in \bigoplus_{\Omega} (M^{\le x})^{\Omega}$ by
\begin{itemize}
\item $m_\Theta =0$,
\item $m_\Omega=m^\prime_\Omega$ for all $\Omega\ne\Theta$.  
\end{itemize}
(Note that $(M^{\le x})^{\Omega}=(M^{< x})^{\Omega}$ for $\Omega\ne\Theta$).
So we need to show that $m$ is contained in $M^{\le x}\subset \bigoplus_{\Omega} (M^{\le x})^\Omega$. 

As $M^{\le x}$ is reflexive it suffices to prove that $m\in (M^{\le x})^L$ for each saturated sublattice $L$ of rank $1$. Now if $\alpha\not\in L$ for all edges $\Gamma\stackrel{\alpha}\linie \Omega$, then $((M^{\le x})^{\Omega})^L$ is a direct summand in $(M^{\le x})^L$ and the statement follows. Otherwise we can assume $L=\DZ\alpha$. By the GKM-condition there is a unique edge $\Gamma\stackrel{\alpha}\linie \Omega$ labeled by $\alpha$, and there is a unique direct summand $((M^{\le x})^L)^\prime$ with support contained in $\{\Omega,\Gamma\}$. Moreover, the local structure algebra $\{(z_\Omega,z_\Gamma)\mid z_\Omega\equiv z_\Gamma\mod\alpha\}$ acts on $((M^{\le x})^L)^\prime$. By our assumption there is some $\tilde m=(\tilde m_\Omega,\tilde m_\Gamma)\in ((M^{\le x})^ L)^\prime$ with $m_\Gamma=\alpha \tilde m_\Gamma$. Then $(\alpha,0) \tilde m=m$, which proves our claim.
\end{proof}

 \subsection{An exact structure on $\CC^{\refl}$}
Let $0\to M\stackrel{f}\to N\stackrel{g}\to O\to 0$ be a sequence in $\CC^{\refl}$. 
\begin{definition} We say that the above sequence is {\em exact}, if for any  open subset $\CJ$ the induced sequence
$$
0\to M^\CJ\stackrel{f^\CJ}\longrightarrow N^\CJ\stackrel{g^\CJ}\longrightarrow O^\CJ\to 0
$$
is an exact sequence of $S$-modules. In this case we call $g$ an {\em admissible epimorphism} and $f$ an {\em admissible monomorphism}.
 \end{definition}
 
Arguments similar to those in the proof of \cite[Theorem 4.1]{FieAdv} show that this indeed gives an exact structure in the sense of Quillen.

\subsection{Projective objects in $\CC^{\refl}$}

The well-known notion of a projective object in an abelian category carries over to categories with an exact structure.
\begin{definition} An object $P$ of $\CC^{\refl}$ is called {\em projective}, if the functor 
$$
\Hom_{\CC^{\refl}}(P,\cdot)\colon \CC^{\refl}\to\DZ\catmod
$$ 
maps short exact sequences to short exact sequences. 
 \end{definition}
 
We will use the following result later in order to show that the objects constructed with the Braden--MacPherson algorithm are projective in $\CC^{\refl}$.
 
\begin{proposition} \label{prop-projobj} Let $B$ be an object in $\CC^{\refl}$ with the following properties:
\begin{enumerate}
\item For each $\Omega\in \lCV$ and each $x\in\Omega$, the object $(B^{\le x})^{ \Omega}$ is a projective graded $S$-module.
\item For each $M\in\CC^{\refl}$ and all $x\in\CV$,  the homomorphism
$\Hom(B,M^{\le x})\to  \Hom(B,M^{<x})$ induced by the restriction homomorphism $M^{\le x}\to M^{<x}$, is surjective.
\end{enumerate}
Then $B$ is projective.
\end{proposition}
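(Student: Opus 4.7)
The plan is to show that every morphism $\phi\colon B\to O$ lifts across an admissible epimorphism $g\colon N\to O$ in $\CC^{\refl}$; since $\Hom_{\CC^{\refl}}(B,-)$ is always left exact, this will suffice. Enumerate the finite set $\CV$ as $x_1,\dots,x_n$ so that each initial segment $\CJ_i=\{x_1,\dots,x_i\}$ is open, and build compatible lifts $\tilde\phi^{\le x_i}\colon B^{\le x_i}\to N^{\le x_i}$ of $\phi^{\le x_i}$ inductively.

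For the inductive step, let $x=x_i$ (a maximal element of $\CJ_i$), and assume that $\tilde\phi^{\le x_j}$ has been built for all $j<i$; restricting along $\{<x\}\subset\CJ_{i-1}$ assembles a lift $\tilde\phi^{<x}\colon B^{<x}\to N^{<x}$ of $\phi^{<x}$. Property~(2) applied with $M=N$ extends the composition $B^{\le x}\to B^{<x}\to N^{<x}$ to some $\psi\colon B^{\le x}\to N^{\le x}$. The discrepancy $\delta:=\phi^{\le x}-g^{\le x}\circ\psi\colon B^{\le x}\to O^{\le x}$ then vanishes on $<x$ by construction and hence factors through the subobject $O_{[x]}\subset O^{\le x}$.

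The snake lemma applied to the admissibly exact sequences on $<x$ and on $\le x$ yields a short exact sequence $0\to M_{[x]}\to N_{[x]}\to O_{[x]}\to 0$. Let $\Omega\in\lCV$ be the vertex containing $x$. By the support condition~(S) both $N_{[x]}$ and $O_{[x]}$ are supported at $\Omega$, so their $\lCZ$-module structure factors through the $\Omega$-projection $\lCZ\to S$, and any $\lCZ$-homomorphism from $B^{\le x}$ into such a module factors canonically through the projection $B^{\le x}\to (B^{\le x})^\Omega$. Thus $\delta$ factors as $B^{\le x}\to(B^{\le x})^\Omega\xrightarrow{\bar\delta}O_{[x]}$. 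Property~(1) asserts that $(B^{\le x})^\Omega$ is a projective graded $S$-module, and $N_{[x]}\to O_{[x]}$ is a graded surjection of $S$-modules, so $\bar\delta$ admits a graded $S$-linear lift $(B^{\le x})^\Omega\to N_{[x]}$. Precomposing with the projection yields $\tilde\eta\colon B^{\le x}\to N_{[x]}\hookrightarrow N^{\le x}$, which vanishes on $<x$ and satisfies $g^{\le x}\circ\tilde\eta=\delta$.

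Setting $\tilde\phi^{\le x}:=\psi+\tilde\eta$ then gives a $\lCZ$-linear lift of $\phi^{\le x}$ whose restriction to $<x$ equals $\tilde\phi^{<x}$; transitivity of restriction ensures compatibility with each previously built $\tilde\phi^{\le y}$ for $y<x$, completing the induction. The main obstacle will be the canonical factorisation $\delta\colon B^{\le x}\to(B^{\le x})^\Omega\to O_{[x]}$: this is where all the structural assumptions intervene simultaneously, because one has to combine the support condition~(S) on $O_{[x]}$ with the fact that the $\Omega$-component projection is $\lCZ$-linear on maps targeting $\Omega$-supported modules. Once this step is justified, property~(1) reduces the residual obstruction to a standard lifting problem against a surjection of graded $S$-modules.
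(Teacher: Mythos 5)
Your proof is correct and follows essentially the same strategy as the paper: reduce to lifting along an admissible epimorphism, induct on $x$ via assumption~(2) to extend the partial lift, observe that the discrepancy vanishes on $<x$ and hence factors through the stalk projection $B^{\le x}\to(B^{\le x})^{\Omega}$ onto the $\Omega$-supported subquotient $O_{[x]}$, and then close the gap using projectivity of $(B^{\le x})^{\Omega}$ together with surjectivity of $N_{[x]}\to O_{[x]}$. Your extra remarks (the snake lemma producing the short exact sequence of $[x]$-subquotients, and the role of the support condition~(S) combined with $S$-torsion-freeness in getting the factorisation through $(B^{\le x})^{\Omega}$) are sound and fill in details the paper leaves implicit; the only cosmetic slip is calling $\CV$ finite, which it need not be, but the induction over the well-founded order on $\CV$ goes through verbatim.
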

\begin{proof} Let $\pi\colon M\sur N$ be an admissible epimorphism in $\CC^{\refl}$ and let $f\colon B\to N$ be a morphism. We  need to construct homomorphisms $g^{\le x}\colon B^{\le x}\to M^{\le x}$ that are compatible with the restriction homomorphisms $(\cdot)^{\le x}\to(\cdot)^{\le y}$ and, moreover, satisfy $\pi^{\le x}\circ g^{\le x}=f^{\le x}$. We construct these homomorphisms by induction on $x$.  If $x$ is not contained in the support of $M$ we set $g^{\le x}=0$. So suppose that $g^{\le y}\colon B^{\le y}\to M^{\le y}$ is already constructed for all $y<x$. We then have an induced homomorphism $g^{<x}\colon B^{<x}\to M^{<x}$. By assumption (2) we can find a homomorphism $g^\prime\colon B^{\le x}\to M^{\le x}$ with $(g^\prime)^{<x}=g^{<x}$. Hence $f^{\le x}-\pi\circ g^\prime$ is such that $(f^{\le x}-g^\prime)^{<x}=0$, i.e. its image is contained in $N_{[x]}$ and it factors over the projection $B^{\le x}\to (B^{\le x})^{\Omega}$ onto the stalk. By the projectivity of this stalk in the category of graded $S$-modules (assumption (1)) and since $\pi_{[x]}\colon M_{[x]}\to N_{[x]}$ is surjective, we can find a homomorphism $h\colon B^{\le x}\to M_{[x]}$ such that 

\centerline{
\xymatrix{
M^{\le x}\ar[r]^{\pi^{\le x}}&N^{\le x}\\
M_{[x]}\ar[u]\ar[r]^{\pi_{[x]}}&N_{[x]}\ar[u]\\ 
&B^{\le x}\ar[u]_{f^{\le x}-\pi\circ g^\prime}\ar[ul]^h
}
}\noindent
commutes. So $g^\prime+h\colon B^{\le x}\to M^{\le x}$ is what we are looking for.
\end{proof}

\section{Filtered sheaves}
We keep the data $\CG,\CV,\le, \lCG,\lCV,k,\lCZ,\dots$ of the previous sections. Now we  introduce the notion of $(\CV,\le)$-cofiltered sheaves on $\lCG$.

\begin{definition} A {\em $(\CV,\le)$-cofiltered sheaf} $\SM$ on $\lCG$ is given by the following data:
\begin{itemize}
\item A $(\CV,\le)$-cofiltered graded $S$-module $\SM^\Omega$ for each vertex $\Omega$ of $\lCG$ such that $\supp_{\le}\,\SM^{\Omega}\subset\Omega$.
\item A  $(\CV,\le)$-cofiltered graded $S$-module $\SM^E$ for each edge $E\colon \Omega\linie \Omega^\prime$ of $\lCG$ with $\alpha(E)\SM^E=\{0\}$ and such that $\supp_{\le}\,\SM^E\subset \supp_{\le}\, \SM^{\Omega}\cup\supp_{\le}\, \SM^{\Omega^\prime}$.
\item A homomorphism $\rho_{\Omega,E}=\rho^\SM_{\Omega,E}\colon \SM^\Omega\to\SM^E$ of cofiltered graded $S$-modules whenever the vertex $\Omega$ lies on the edge $E$.
\end{itemize}
A morphism $f\colon \SM\to\SN$ between the sheaves $\SM$ and $\SN$ on $\CG$ is given by homomorphisms $f^\Omega\colon \SM^\Omega\to\SN^\Omega$ and $f^E\colon\SM^E\to\SN^E$ of cofiltered graded  $S$-modules for all vertices $\Omega$ and all edges $E$ such that the diagram 

\centerline{
\xymatrix{
 \SM^\Omega\ar[d]_{\rho^\SM_{\Omega,E}}\ar[r]^{f^\Omega}&\SN^\Omega\ar[d]^{\rho^\SN_{\Omega,E}}\\
\SM^E\ar[r]^{f^E}&\SN^E 
}
}
\noindent
commutes whenever the vertex $\Omega$ is adjacent to the edge $E$.  
\end{definition}

%We denote by $\lCG\catmod^{\le}$ the category of cofiltered sheaves on $\lCG$ that have the property that $\SM^\Omega$ and $\SM^E$ are finitely generated $S$-modules for each vertex $\Omega$ and each edge $E$ and only finitely many $\SM^\Omega$ and $\SM^E$ are non-zero. 

\begin{remark} Note that we do not assume that the filtrations on $\SM^{\Omega}$ and $\SM^{E}$ are flabby.
\end{remark}

\begin{definition} The support of a cofiltered sheaf $\SM$ is the union of the supports of its stalks, i.e.
$$
\supp_{\le}\,\SM=\bigcup_{\Omega}\supp_{\le}\,\SM^\Omega.
$$
\end{definition}

 \subsection{Sections of filtered sheaves}

Let  $\SM$ be a cofiltered sheaf on $\lCG$ and $\CJ$ an open subset of $\CV$.  
 
 \begin{definition} The {\em space of local sections} of $\SM$ over $\CJ$ is
$$
\Gamma(\CJ,\SM):=\left.\left\{(m_\Omega)\in\bigoplus_{\Omega\in\lCV}(\SM^\Omega)^\CJ\right| \begin{matrix}
\text{ $\rho^\CJ_{\Omega,E}(m_\Omega)=\rho^\CJ_{\Omega^\prime,E}(m_{\Omega^\prime})$}\\
\text{ for all edges $E\colon \Omega\llinie\Omega^\prime$}
\end{matrix}\right\}.
$$
\end{definition}

For open subsets $\CJ^\prime\subset\CJ$ the homomorphisms $(\SM^{\Omega})^\CJ\to (\SM^{\Omega})^{\CJ^\prime}$ on the stalks induce a homomorphism $\Gamma(\CJ,\SM)\to\Gamma(\CJ^\prime,\SM)$ that we denote by $m\mapsto m|_{\CJ^\prime}$.

\begin{lemma} \label{lemma-glue} Let $\SM$ be a $(\CV,\le)$-cofiltered sheaf, let $\{\CJ_i\}_{i\in I}$ be a family of open subsets of $\CV$ and let $\CJ=\bigcup_{i\in I}\CJ_i$. 
Then the homomorphism 
\begin{align*}
\Gamma(\CJ,\SM)&\to\prod_{i\in I}\Gamma(\CJ_i,\SM),\\
m&\mapsto (m|_{\CJ_i})
\end{align*}
is injective with image
$$
R=\left\{(m_i)\in\prod_{i\in I}\Gamma(\CJ_i,\SM)\left|\begin{matrix}\text{ $m_i|_{\CJ_i\cap\CJ_j}=m_j|_{\CJ_i\cap\CJ_j}$}\\ \text{ for all $i,j\in i$}\end{matrix}\right\}\right..
$$
\end{lemma}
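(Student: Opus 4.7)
The plan is to reduce the statement to the analogous result for cofiltered objects, namely Lemma \ref{lemma-admcofilt}, applied separately to each stalk $\SM^\Omega$ and each edge module $\SM^E$. The reason this should work is that $\Gamma(\CJ,\SM)$ is cut out inside $\bigoplus_\Omega (\SM^\Omega)^\CJ$ by a family of linear equations indexed by the edges of $\lCG$, and the restriction maps are defined componentwise on the stalks.

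First I would prove injectivity. Suppose $m = (m_\Omega)_\Omega \in \Gamma(\CJ,\SM)$ restricts to $0$ in every $\Gamma(\CJ_i,\SM)$. Then for each $\Omega$ the element $m_\Omega \in (\SM^\Omega)^\CJ$ satisfies $m_\Omega|_{\CJ_i} = 0$ for every $i \in I$. Lemma \ref{lemma-admcofilt}, applied to the cofiltered $S$-module $\SM^\Omega$, then gives $m_\Omega = 0$, hence $m = 0$. Next, containment of the image in $R$ is immediate from the functoriality of $m \mapsto m|_{\CJ^\prime}$ and the associativity of consecutive restrictions.

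For the surjectivity onto $R$, I would take $(m_i) \in R$, write $m_i = (m_{i,\Omega})_\Omega$, and for each fixed vertex $\Omega$ observe that the family $(m_{i,\Omega})_{i \in I}$ lies in the equaliser described in Lemma \ref{lemma-admcofilt} for the cofiltered module $\SM^\Omega$ (because the compatibility condition $m_i|_{\CJ_i \cap \CJ_j} = m_j|_{\CJ_i \cap \CJ_j}$ projects to the corresponding equality on each stalk). Hence there is a unique $m_\Omega \in (\SM^\Omega)^\CJ$ with $m_\Omega|_{\CJ_i} = m_{i,\Omega}$ for all $i$. The candidate preimage is $m := (m_\Omega)_\Omega \in \bigoplus_\Omega (\SM^\Omega)^\CJ$.

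The main obstacle, and the step that really uses the sheaf structure, is then to verify that $m$ actually lies in $\Gamma(\CJ,\SM)$, i.e. satisfies $\rho^\CJ_{\Omega,E}(m_\Omega) = \rho^\CJ_{\Omega^\prime,E}(m_{\Omega^\prime})$ for every edge $E \colon \Omega \llinie \Omega^\prime$. For this I would restrict both sides to each $\CJ_i$: since $m_i \in \Gamma(\CJ_i,\SM)$ we have $\rho^{\CJ_i}_{\Omega,E}(m_{i,\Omega}) = \rho^{\CJ_i}_{\Omega^\prime,E}(m_{i,\Omega^\prime})$, and the compatibility of $\rho$ with restriction gives that $\rho^\CJ_{\Omega,E}(m_\Omega) - \rho^\CJ_{\Omega^\prime,E}(m_{\Omega^\prime}) \in (\SM^E)^\CJ$ restricts to zero on every $\CJ_i$. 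Applying the injectivity part of Lemma \ref{lemma-admcofilt}, this time to the cofiltered $S$-module $\SM^E$, forces the difference to vanish. Finally, the uniqueness of $m_\Omega$ above ensures $m|_{\CJ_i} = m_i$, completing the proof.
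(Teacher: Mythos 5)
Your proof is correct and follows essentially the same route as the paper's: reduce componentwise to the stalk modules $\SM^\Omega$ and the edge modules $\SM^E$, invoke Lemma \ref{lemma-admcofilt} to glue, and verify the $\rho$-compatibility of the glued element by restricting to each $\CJ_i$ and using injectivity for $\SM^E$. The only (cosmetic) difference is that you cite Lemma \ref{lemma-admcofilt} for the injectivity and the edge-condition check, whereas the paper repeats the short pointwise argument inline; the content is identical.
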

\begin{proof}  
Suppose that $m=(m_\Omega)$ is in the kernel, i.e. $m|_{\CJ_i}=0$ for all $i\in I$. Suppose that $m_\Omega\ne 0$. Then there is some $x\in\Omega$ with $(m_{\Omega})|{\le x}\ne 0$. Choose $i\in I$ such that $x\in\CJ_i$. Then $(m_\Omega)|_{\CJ_i}\ne 0$, which contradicts our assumption $m|_{\CJ_i}=0$. 

It is, moreover, clear, that the image of the above map is contained in $R$. Conversely, suppose that $m_i\in R$. For a vertex $\Omega$ we obtain an $I$-tuple $((m_i)_\Omega)\in\prod_{i\in I}\SM^{\Omega,\CJ_i}$ with the property $(m_i)_\Omega|_{\CJ_i\cap\CJ_j}=(m_j)_\Omega|_{\CJ_i\cap\CJ_j}$ for all $i,j\in I$. By Lemma \ref{lemma-admcofilt}, this  uniquely defines an element $m_\Omega\in \SM^{\Omega,\CJ}$. Now we only need to show that $\rho^\CJ_{\Omega,E}(m_\Omega)=\rho^\CJ_{\Omega^\prime,E}(m_{\Omega^\prime})$ for all edges $E\colon\Omega\llinie\Omega^\prime$. But suppose that this fails for some edge $E$. Then there must be some $i\in I$ such that $\rho^{\CJ_i}_{\Omega,E}(m_\Omega|_{\CJ_i})\ne \rho^{\CJ_i}_{\Omega^\prime,E}(m_{\Omega^\prime}|_{\CJ_i})$. But this means that $\rho^{\CJ_i}_{\Omega,E}((m_i)_\Omega)\ne \rho^{\CJ_i}_{\Omega^\prime,E}((m_i)_{\Omega^\prime})$, which contradicts the fact that $m_i\in\Gamma(\CJ_i,\SM)$.
\end{proof}

\subsection{Extension spaces for sheaves}

Let $\SM$ be a filtered sheaf on $\lCG$. Let $\Theta$ be a vertex of $\lCG$ and $z\in\Theta$.  Let $\CE_\Theta$ denote the set of edges of $\lCG$ that are adjacent to $\Theta$.  We define 
\begin{align*}
d_z\colon\SM^{\Theta,\le z}&\to\left(\bigoplus_{E\in\CE_\Theta}\SM^{E,\le z}\right)\oplus\SM^{\Theta,<z},\\
m&\mapsto\left((\rho^{\le z}_{\Theta,E}(m))_{E\in\CE_{\Theta}},m|_{<z}\right).
\end{align*}
We also define
\begin{align*}
u_z\colon \Gamma(<z,\SM)&\to\left(\bigoplus_{E\in\CE_\Theta}\SM^{E,\le z}\right)\oplus \SM^{\Theta,<z},\\
(m_\Omega)&\mapsto \left((\rho_{\Omega,E}^{< z}(m_\Omega))_{E\in\CE_\Theta}, m_\Theta\right).
\end{align*}
Note that above we used the fact that for all $\Omega$ with $z\not\in\Omega$ we have $\SM^{\Omega,<z}=\SM^{\Omega,\le z}$, as $z\not\in\supp_{\le}\,\SM^{\Omega}$. We denote the image of $u_z$ by $\SM^{\delta z}$.

It might be instructive for the reader to compare the next statement with  Lemma \ref{lemma-extspac}.
\begin{lemma} For each $z\in\CV$ the homomorphism $\Gamma(\le z,\SM)\to\SM^{\Theta,\le z}\oplus\Gamma(<z,\SM)$, $m\mapsto (m_\Theta, m|_{<z})$ is injective with image 
$$
R=\{(m_1,m_2)\in \SM^{\Theta,\le z}\oplus \Gamma(<z,\SM)\mid d_z(m_1)=u_z(m_2)\}.
$$
\end{lemma}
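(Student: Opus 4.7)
The plan is to closely mirror the proof of Lemma \ref{lemma-extspac}, substituting the stalk-plus-edge data that defines a section of a sheaf for the single cofiltered module used there. The crucial geometric input is the support condition on stalks together with the remark immediately preceding the lemma: whenever $z \notin \Omega$ one has $\SM^{\Omega,\le z} = \SM^{\Omega,<z}$, and analogously $\SM^{E,\le z} = \SM^{E,<z}$ for any edge $E \notin \CE_\Theta$, because then $z \notin \supp_\le \SM^\Omega$ (respectively $z \notin \supp_\le \SM^E$, using the support condition on edge stalks). These identifications will do most of the heavy lifting.

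First I would establish injectivity. Given $m = (m_\Omega) \in \Gamma(\le z,\SM)$ with $m_\Theta = 0$ and $m|_{<z} = 0$, the second condition unpacks as $m_\Omega|_{<z} = 0$ for all $\Omega$. For every $\Omega \ne \Theta$, disjointness of $G$-orbits gives $z \notin \Omega$, so the identification above forces $m_\Omega = 0$; combined with $m_\Theta = 0$ this shows $m = 0$.

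Next I would verify that the image lies in $R$. For $m = (m_\Omega) \in \Gamma(\le z,\SM)$ and each $E \in \CE_\Theta$ joining $\Theta$ to a vertex $\Omega$, the section condition gives $\rho^{\le z}_{\Theta,E}(m_\Theta) = \rho^{\le z}_{\Omega,E}(m_\Omega)$. Applying $\SM^{\Omega,\le z} = \SM^{\Omega,<z}$ to rewrite the right-hand side in terms of $m_\Omega$ viewed as an element of $\SM^{\Omega,<z}$ shows that the $E$-components of $d_z(m_\Theta)$ and $u_z(m|_{<z})$ coincide; the $\SM^{\Theta,<z}$-components are both equal to $m_\Theta|_{<z}$, hence automatically match.

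The main work will be the surjectivity onto $R$. Given $(m_1,m_2) \in R$, I would define $m = (m_\Omega)$ by setting $m_\Theta := m_1$ and, via the identification $\SM^{\Omega,\le z} = \SM^{\Omega,<z}$ for $\Omega \ne \Theta$, setting $m_\Omega := (m_2)_\Omega$. To check that $m \in \Gamma(\le z,\SM)$, I would split the edge compatibility condition into two cases. For $E \in \CE_\Theta$, joining $\Theta$ to some $\Omega$, the required equality $\rho^{\le z}_{\Theta,E}(m_\Theta) = \rho^{\le z}_{\Omega,E}((m_2)_\Omega)$ is exactly the $E$-component of the hypothesis $d_z(m_1) = u_z(m_2)$. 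For $E \notin \CE_\Theta$ neither endpoint is $\Theta$, so the compatibility at level $\le z$ is, by the identifications $\SM^{\Omega,\le z}=\SM^{\Omega,<z}$ for both endpoints and $\SM^{E,\le z}=\SM^{E,<z}$, the same as at level ${<z}$, and this holds because $m_2 \in \Gamma(<z,\SM)$. Finally, the $\SM^{\Theta,<z}$-component of $d_z(m_1) = u_z(m_2)$ reads $m_1|_{<z} = (m_2)_\Theta$, which together with $m_\Omega = (m_2)_\Omega$ for $\Omega \ne \Theta$ shows $m|_{<z} = m_2$, while $m_\Theta = m_1$ by construction. The hard part will be keeping track of the various identifications cleanly; once they are in place, the argument is essentially the same bookkeeping as in Lemma \ref{lemma-extspac}, with no need for any flabbiness-type lifting (which is built into the hypothesis via the condition $d_z(m_1) = u_z(m_2)$).
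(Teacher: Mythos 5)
Your argument is correct and follows exactly the paper's route: define the candidate preimage by $m_\Theta := m_1$, $m_\Omega := (m_2)_\Omega$ for $\Omega \ne \Theta$ via the identification $\SM^{\Omega,\le z} = \SM^{\Omega,<z}$, then split the edge-compatibility check into $E \in \CE_\Theta$ (where $d_z(m_1)=u_z(m_2)$ is used) and $E \notin \CE_\Theta$ (where the $<z$-level compatibility of $m_2$ is used). You spell out the injectivity and "image contained in $R$" steps that the paper dismisses as clear, and you make the edge-stalk identification $\SM^{E,\le z}=\SM^{E,<z}$ for $E\notin\CE_\Theta$ explicit, but this is the same proof.
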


\begin{proof} It is clear that the homomorphism is injective and that its image is contained in the prescribed set.  Conversely, let $(m_1,m_2)\in \SM^{\Omega,\le z}\oplus \Gamma(<z,\SM)$ be such that $d_z(m_1)=u_z(m_2)$.  Define $m=(m_\Omega)\in\bigoplus\SM^{\Omega,\le z}$ by $m_\Theta=m_1$ and $m_\Omega=(m_2)_\Omega$ if $\Omega\ne\Theta$ (note that then $\SM^{\Omega,\le z}=\SM^{\Omega,<z}$). 

We claim that $m\in\Gamma(\le z,\SM)$, which yields the statement of the lemma, as $m_\Omega|_{<z}=(m_2)_{\Omega}$ for $\Omega\ne\Theta$ and  $m_1|_{<z}=(m_2)_\Theta$ as $d_z(m_1)=u_z(m_2)$. 

So let $E\colon \Omega\llinie\Omega^\prime$ be an edge. If $\Theta\ne\Omega$ and $\Theta\ne\Omega^\prime$, then 
$$
\rho_{\Omega,E}^{\le z}(m_\Omega)=\rho_{\Omega,E}^{< z}((m_2)_\Omega)=\rho_{\Omega^\prime,E}^{<z}((m_2)_{\Omega^\prime})=\rho_{\Omega^\prime,E}^{\le z}(m_{\Omega^\prime}).
$$
Otherwise, we can assume without loss of generality that $\Omega^\prime=\Theta$. But in this case 
$$
\rho^{\le z}_{\Omega,E}((m_2)_\Omega)=\rho^{\le  z}_{\Omega,E}(m_1)
$$
as $d_z(m_1)=u_z(m_2)$.
 \end{proof}

\subsection{Flabby sheaves}

Here is the sheaf theoretic analogue of Lemma \ref{lemma-surj}.

\begin{lemma}\label{lemma-flab} Let $\SM$ be a cofiltered sheaf. Then the following are equivalent:
\begin{enumerate}
\item For each pair $\CJ^\prime\subset\CJ$ of open subsets of $\CV$, the homomorphism $\Gamma(\CJ,\SM)\to\Gamma(\CJ^\prime,\SM)$ is surjective.
\item For each open subset $\CJ$ of $\CV$ with maximal element $x$ the homomorphism $\Gamma(\CJ,\SM)\to\Gamma(\CJ\setminus\{x\},\SM)$ is surjective.

\item For each $z\in\lCV$, the homomorphism $\Gamma(\le z,\SM)\to\Gamma(<z,\SM)$ is surjective.
\item For each $z\in\CV$, the  image of $u_z$ is contained in the image of $d_z$.
\end{enumerate}
\end{lemma}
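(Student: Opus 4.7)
The plan is to adapt the strategy of Lemma \ref{lemma-surj} to the sheaf setting by combining the extension-space lemma just proved with the gluing Lemma \ref{lemma-glue}. Concretely I would prove the cycle $(1)\Rightarrow(2)\Rightarrow(3)\Leftrightarrow(4)$ and then close the loop by $(3)\Rightarrow(2)\Rightarrow(1)$.

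The implications $(1)\Rightarrow(2)$ and $(2)\Rightarrow(3)$ are immediate specializations: in the first case, take $\CJ^\prime=\CJ\setminus\{x\}$; in the second, take $\CJ=\{\le z\}$, which is open with maximal element $z$ and satisfies $\CJ\setminus\{z\}=\{<z\}$. For $(3)\Leftrightarrow(4)$ I invoke the extension-space lemma for sheaves stated immediately above the statement to be proved: it identifies the preimages of a section $m^\prime\in\Gamma(<z,\SM)$ under the restriction $\Gamma(\le z,\SM)\to\Gamma(<z,\SM)$ with pairs $(m_1,m^\prime)\in\SM^{\Theta,\le z}\oplus\Gamma(<z,\SM)$ satisfying $d_z(m_1)=u_z(m^\prime)$. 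Thus $m^\prime$ lifts if and only if $u_z(m^\prime)$ lies in the image of $d_z$, which gives both directions simultaneously.

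For $(3)\Rightarrow(2)$, suppose $\CJ$ is open with maximal element $x$ and put $\CJ^\prime=\CJ\setminus\{x\}$. Then $\{\le x\}\subseteq\CJ$ by openness and $\CJ^\prime\cap\{\le x\}=\{<x\}$ since $x$ is maximal in $\CJ$. Given $m^\prime\in\Gamma(\CJ^\prime,\SM)$, use (3) to lift $m^\prime|_{<x}$ to some $n\in\Gamma(\le x,\SM)$. The sections $m^\prime$ and $n$ agree on the intersection $\{<x\}$, so Lemma \ref{lemma-glue} glues them into an element of $\Gamma(\CJ^\prime\cup\{\le x\},\SM)=\Gamma(\CJ,\SM)$ that restricts to $m^\prime$.

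The core implication $(2)\Rightarrow(1)$ is the main obstacle, because $\CV$ may be infinite. I would apply Zorn's lemma to the poset $\mathscr{P}$ of pairs $(\CJ^{\prime\prime},m^{\prime\prime})$ with $\CJ^\prime\subseteq\CJ^{\prime\prime}\subseteq\CJ$ open and $m^{\prime\prime}\in\Gamma(\CJ^{\prime\prime},\SM)$ restricting to $m^\prime$, ordered by inclusion/extension; Lemma \ref{lemma-glue} provides upper bounds for chains. Let $(\CJ^*,m^*)$ be a maximal element. If $\CJ^*\ne\CJ$, pick $x\in\CJ\setminus\CJ^*$ minimal for $\le$ on this subset. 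Then $\{<x\}\subseteq\CJ^*$, hence $\CJ^*\cup\{x\}$ is open and $x$ is maximal in it (any $y>x$ with $y\in\CJ^*$ would force $x\in\CJ^*$ by openness of $\CJ^*$). Applying (2) to $\CJ^*\cup\{x\}$ produces a strictly larger extension of $(\CJ^*,m^*)$ in $\mathscr{P}$, contradicting maximality. The delicate point is the extraction of such a minimal $x$; this uses well-foundedness of $\le$ restricted to $\{\le y\}$ for any $y\in\CJ$, which is satisfied in the periodic moment graph setting that motivates the paper.
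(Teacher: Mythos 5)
Your chain of implications matches the paper's: $(1)\Rightarrow(2)$ and $(2)\Rightarrow(3)$ by specialization, $(3)\Leftrightarrow(4)$ via the extension-space lemma preceding the statement, and $(3)\Rightarrow(2)$ by lifting $m'|_{<x}$ to $\Gamma(\le x,\SM)$ and gluing with Lemma \ref{lemma-glue}. The one place where the paper is terse is $(1)\Leftrightarrow(2)$, which it simply calls clear; you attempt to fill this in with an explicit Zorn argument for $(2)\Rightarrow(1)$, which is a reasonable instinct.

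The closing step of that Zorn argument, however, rests on a false premise. You extract a minimal element $x$ of $\CJ\setminus\CJ^*$ by invoking well-foundedness of $\le$ on $\{\le y\}$ for $y\in\CJ$, and you assert that this holds in the periodic moment graph setting. It does not: for the generic Bruhat order $\preceq$ on $\SA$, the downward interval $\{\preceq A\}$ is infinite, and already in rank one it is order-isomorphic to $\{n\in\DZ\mid n\le 0\}$, which has no minimal element; more generally $A\succ t_{-\gamma}(A)\succ t_{-2\gamma}(A)\succ\cdots$ for a dominant $\gamma\in\DZ R$ is an infinite descending chain inside $\{\preceq A\}$. What your argument actually needs is the weaker statement that $\CJ\setminus\CJ^*$ has a minimal element whenever $\CJ'\subseteq\CJ^*\subsetneq\CJ$ are open with $\CJ'\neq\emptyset$; this is not a consequence of well-foundedness of $\{\le y\}$ (which fails), and would itself require a separate justification (for instance via finiteness of intervals $[A,B]$ in $\preceq$, or via a support-bounded-below hypothesis on $\SM$, combined with Lemma \ref{lemma-glue} at limit stages of a transfinite construction). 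Since the paper itself only asserts this step, the informality is shared with the published proof, but the specific justification you give is incorrect and the Zorn argument as written does not close.
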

\begin{proof} It is clear that (1) and (2) are equivalent. So let us show that (2) and (3) are equivalent. Clearly, (2) implies (3). So assume (3) and suppose that $\CJ$ is an  open subset of $\CV$ with maximal element $x\in\CJ$. Let $m^\prime$ be a section in $\Gamma(\CJ\setminus\{x\},\SM)$. By assumption there is a section $\tilde m\in\Gamma(\le x,\SM)$ with the property that $\tilde m|_{<x} =m^\prime|_{<x}$. By Lemma \ref{lemma-glue} there is an element $m\in\Gamma(\CJ,\SM)$ with $m|_{\le x}=\tilde m$ and $m|_{\CJ\setminus\{x\}}=m^\prime$. In particular, this is an extension of $m^\prime$.

We show now that (3) and (4) are equivalent.  Let $z\in\CV$ and suppose that $z\in\Theta$. Let $m\in\Gamma(\le z,\SM)$ and $m^\prime\in\Gamma(<z,\SM)$. Then $m$ is a preimage of $m^\prime$ if and only if $d_z(m_\Theta)=u_z(m^\prime)$ and $m_\Omega=m^\prime_\Omega$ for all $\Omega\ne\Theta$. 
\end{proof}

\begin{definition} We say that a cofiltered sheaf $\SM$ on $\lCG$ is {\em flabby} if the equivalent conditions of Lemma \ref{lemma-flab} are satisfied.
\end{definition}

\subsection{A condition for reflexivity}

Let $\SM$ be a cofiltered sheaf on $\lCG$.
\begin{lemma} \label{lemma-refl} Suppose that for any $x\in\CV$, the $S$-module $\SM^{\Omega,\le x}$ is reflexive, and that for any edge $E$ labeled by $\alpha$ the $S/\alpha S$-module $\SM^{E,\le x}$ is reflexive. Then the $S$-module  $\Gamma(\CJ,\SM)$ of local sections is reflexive for any open subset $\CJ$ of $\CV$. 
\end{lemma}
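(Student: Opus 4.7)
The strategy is to realize $\Gamma(\CJ, \SM)$ as the kernel of a map from a reflexive $S$-module into a direct sum of edge modules, and then to apply Serre's criterion $S_2$ for reflexivity: over the polynomial ring $S$, a finitely generated torsion-free module $N$ is reflexive if and only if $N = \bigcap_\fp N_\fp$ inside $N \otimes_S \mathrm{Frac}(S)$, where $\fp$ ranges over the height-one primes of $S$.

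First I would establish, by induction on $|\supp_\le \SM^\Omega \cap \CJ|$ using Lemma \ref{lemma-admcofilt}, that each stalk space $(\SM^\Omega)^\CJ$ is a reflexive $S$-module. Peeling off a maximal element $x \in \CJ \cap \supp_\le \SM^\Omega$ realizes $(\SM^\Omega)^\CJ$ as the pullback of $\SM^{\Omega, \le x}$ and $(\SM^\Omega)^{\CJ \setminus \{x\}}$ over $\SM^{\Omega, <x}$; both outer terms are reflexive by hypothesis and by induction, and one checks directly that the pullback inherits reflexivity. Hence $N := \bigoplus_\Omega (\SM^\Omega)^\CJ$ is a finitely generated reflexive $S$-module, and by definition
$$
\Gamma(\CJ, \SM) = \ker\Bigl(\phi \colon N \to Q := \bigoplus_{E \colon \Omega \llinie \Omega^\prime} (\SM^E)^\CJ\Bigr), \qquad \phi(m)_E = \rho^\CJ_{\Omega, E}(m_\Omega) - \rho^\CJ_{\Omega^\prime, E}(m_{\Omega^\prime}).
$$

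Next I would verify the intersection-of-localizations condition for $\Gamma(\CJ, \SM)$ prime by prime. At a height-one prime $\fp$ not of the form $(\alpha)$ for some edge label $\alpha$, every summand $(\SM^E)^\CJ$ is annihilated by a unit of $S_\fp$, so $Q_\fp = 0$ and $\Gamma(\CJ, \SM)_\fp$ coincides with the reflexive module $N_\fp$. At a prime $\fp = (\alpha)$ arising from an edge label, the surviving part of $Q_\fp$ is a direct sum over those edges $E$ with label $\alpha$, and is naturally an $S_\fp / \alpha S_\fp$-module. The hypothesis that each $\SM^{E, \le x}$ is reflexive over $S/\alpha S$, combined with the same inductive argument as in the first step now run over $S/\alpha S$, makes each $(\SM^E)^\CJ$ reflexive over $S/\alpha S$. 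A further application of Serre's criterion, this time over the DVR $S_\fp$, then shows that the kernel of $\phi_\fp$ is exactly the $\fp$-localization of $\Gamma(\CJ, \SM)$. Collecting all primes gives $\Gamma(\CJ, \SM) = \bigcap_\fp \Gamma(\CJ, \SM)_\fp$, which yields the desired reflexivity.

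The main obstacle is the second step's analysis at the distinguished primes $(\alpha)$: one has to transfer reflexivity from $S/\alpha S$ into the correct $S$-level intersection condition inside $N$. The GKM condition is crucial here, as it guarantees that at each vertex the labels of distinct incident edges are pairwise $k$-linearly independent; this ensures that the contributions from different edge labels decouple cleanly after localization at $(\alpha)$, so that the $S/\alpha S$-reflexivity of individual edge modules controls the $S$-reflexivity of $\ker \phi$ without interference between edges of unrelated labels.
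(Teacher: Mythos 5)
Your argument is substantively correct and tracks the paper's proof quite closely: both reduce to showing that the cofiltered section spaces $(\SM^\Omega)^\CJ$ are reflexive over $S$ and $(\SM^E)^\CJ$ are reflexive (or at least torsion-free) over $S/\alpha(E) S$, and then conclude reflexivity of $\Gamma(\CJ,\SM)$ from its description as a kernel. The difference is that the paper delegates the final step to \cite[Lemma~4.7]{FieAdv}, whereas you reprove it directly via the height-one localization criterion; this buys you a self-contained argument, which is fine. Two remarks. First, for the stalk reflexivity, the paper's observation is slightly cleaner than your induction: $(\SM^\Omega)^\CJ$ is by definition the kernel of a map from the reflexive module $\prod_{x\in\CJ}\SM^{\Omega,\le x}$ into a torsion-free module (the gluing constraints), and such kernels are automatically reflexive; your inductive peeling should also remove maximal elements of $\CJ$ itself rather than of $\CJ\cap\supp_\le\SM^\Omega$, since the latter need not leave $\CJ\setminus\{x\}$ open. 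Second, and more importantly, your closing claim that the GKM condition is ``crucial'' here is not correct: the argument at a height-one prime $\fp=(\alpha)$ only needs that each $(\SM^E)^\CJ$ with $\alpha(E)\in k\alpha$ is torsion-free over $S/\alpha S$, so that $s\cdot\phi(m)_E=0$ for some $s\notin(\alpha)$ forces $\phi(m)_E=0$; this works edge by edge with no interference, and edges with unrelated labels are killed after localization in any case. The GKM condition is genuinely used elsewhere in the paper (e.g.\ in Lemma~\ref{lemma-locext}), but not in this lemma.
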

\begin{proof} Let $\CJ$ be an open subset of $\CV$. Then  $\prod_{x\in\CV}\SM^{\Omega,\le x}$ is a reflexive $S$-module, and it follows from the definition of the submodule $\SM^{\Omega,\CJ}$ that it  is reflexive as well. Analogously one argues that $\SM^{E,\CJ}$ is a reflexive $S/\alpha S$-module. Then the statement follows from Lemma 4.7 in \cite{FieAdv}.
\end{proof}
\section{A filtered Braden--MacPherson algorithm}

We will now give a cofiltered version of the {\em canonical sheaf} of Braden and MacPherson (cf. \cite{BMP}). Its defining properties are listed in the following Theorem.

\begin{theorem} \label{thm-filtBMP} For each $w\in\CV$ there is an up to isomorphism unique $(\CV,\le)$-cofiltered sheaf $\SB(w)$ on $\lCG$ with the following properties:
\begin{enumerate}
\item For each $\Omega\in\lCV$  and each $x\in\Omega$ the image of $d_x$ is $\SB(w)^{\delta x}$ and the resulting  homomorphism $\SB(w)^{\Omega,\le x}\to\SB(w)^{\delta x}$ is a projective cover in the category of graded $S$-modules.
\item  The support of $\SB(w)$ is contained in $\{\ge w\}$ and 
$$
\SB(w)^{\Omega,\le w}=\begin{cases} 
S,&\text{ if $w\in\Omega$},\\ 
0,&\text{ if $w\not\in\Omega$}.
\end{cases}
$$
\item Let $E\colon\Theta\stackrel{\alpha}\llinie\Omega$ be an edge. If $z\in\Omega$, then $\rho_{\Theta,E}^{\le z}\colon\SB(w)^{\Theta,\le z}\to\SB(w)^{E,\le z}$ is surjective with kernel $\alpha\SB(w)^{\Theta,\le z}$. 
\end{enumerate}
\end{theorem}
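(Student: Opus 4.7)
The plan is to prove both existence and uniqueness of $\SB(w)$ by induction along the partial order $\le$ on $\CV$. For the base, set $\SB(w)^{\Omega,\le y} = 0$ and $\SB(w)^{E,\le y} = 0$ for every $y \in \CV$ with $y \not\ge w$ and every $\Omega$, $E$; at $y = w$, define $\SB(w)^{\Omega,\le w} = S$ if $w \in \Omega$ and $0$ otherwise, with all edge spaces at level $\le w$ equal to zero. Property (2) is then built in, and properties (1), (3), flabbiness, and condition (S) hold vacuously at this stage.

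For the inductive step, fix $x \in \CV$ with $x > w$, and let $\Theta \in \lCV$ denote the orbit containing $x$. Assume $\SB(w)^{\Omega,\le y}$ and $\SB(w)^{E,\le y}$, together with all restrictions and $\rho$'s, have been constructed for $y < x$ and satisfy the three conditions. First I would extend to level $\le x$ wherever this is forced. For $\Omega \ne \Theta$, set $\SB(w)^{\Omega,\le x} := \SB(w)^{\Omega,<x}$ with identity restriction; for every edge $E\colon \Theta \stackrel{\alpha}\linie \Omega$ incident to $\Theta$, property (3) applied with the roles of the two endpoints exchanged (so $z = x$ plays the role of an element of $\Theta$, and the stalk mapping by a quotient is $\SB(w)^{\Omega,\le x}$) forces $\SB(w)^{E,\le x} := \SB(w)^{\Omega,<x}/\alpha\SB(w)^{\Omega,<x}$ with $\rho_{\Omega,E}^{\le x}$ the canonical projection; for edges $E$ not incident to $\Theta$, set $\SB(w)^{E,\le x} := \SB(w)^{E,<x}$. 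With these choices I can assemble $\Gamma(<x,\SB(w))$ and the homomorphism $u_x$ defined in the extension spaces subsection. Put $\SB(w)^{\delta x} := \im u_x$; this is finitely generated over $S$ as a quotient of $\Gamma(<x,\SB(w))$, itself a submodule of a finite direct sum of inductively finitely generated $S$-modules. Choose a projective cover $p\colon P \to \SB(w)^{\delta x}$ in the category of graded $S$-modules (which exists since $S$ is graded local), and set $\SB(w)^{\Theta,\le x} := P$ with $d_x := p$. Reading off the components of $p$ produces the morphisms $\rho_{\Theta,E}^{\le x}$ for $E \in \CE_\Theta$ and the cofiltration restriction $P \to \SB(w)^{\Theta,<x}$; composing the latter with the inductively available $\SB(w)^{\Theta,<x} \to \SB(w)^{\Theta,\le y}$ for $y < x$ delivers all remaining restrictions, whose cocycle compatibilities reduce to those in degrees $<x$.

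Once the construction is in place, I would verify the three properties. Property (1) at level $x$ is built into the choice of $d_x$ as a projective cover; for $y < x$ it is inductive. Property (2) is the base case. Property (3) at $z < x$ is inductive; at $z = x$ it applies to edges $E\colon \Theta' \stackrel{\alpha}\linie \Omega'$ with $x \in \Omega'$, which forces $\Theta' \ne \Theta$ (since $x \in \Theta$), and then $\SB(w)^{E,\le x}$ was set to be $\SB(w)^{\Theta',<x}/\alpha$ with $\rho_{\Theta',E}^{\le x}$ the projection, as required. Condition (S) holds because the only new nonzero data at level $x$ appears in $\SB(w)^{\Theta,\le x}$ and in edge spaces adjacent to $\Theta$. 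Flabbiness at $x$ follows from the characterization in Lemma \ref{lemma-flab}(4): by construction the image of $u_x$ coincides with the image of $d_x$.

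Uniqueness would be carried out by the same induction: given a second sheaf $\SB'(w)$ with the listed properties, the inductive isomorphism $\SB(w)|_{<x} \cong \SB'(w)|_{<x}$ transports $\Gamma(<x,\cdot)$ from one to the other, and property (3) identifies the edge spaces at level $\le x$, so that $\SB(w)^{\delta x} \cong \SB'(w)^{\delta x}$; the uniqueness of projective covers in the graded local setting then yields a lift to $\SB(w)^{\Theta,\le x} \cong \SB'(w)^{\Theta,\le x}$. The main obstacle I anticipate is not conceptual but verificational: one must check carefully that the inductively defined pieces truly assemble into a $(\CV,\le)$-cofiltered sheaf in the sense of the definition --- that all squares of $\rho$'s and cofiltration restrictions commute, that condition (S) propagates correctly through the canonical decompositions of Section \ref{subsec-candec}, and that $\SB(w)^{\delta x}$, viewed inside the target of $u_x$, genuinely has support inside $\Theta$ so that the projective cover step produces a stalk with the right support properties.
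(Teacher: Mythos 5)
Your proposal matches the paper's argument: the same inductive BMP-style algorithm (base case at $w$, inductive extension forcing edge stalks via property (3), assembling $u_x$ and $\SB^{\delta x}$, then taking a projective cover to get $\SB^{\Theta,\le x}$), and uniqueness by the same induction using uniqueness of projective covers over the graded local ring $S$. The only cosmetic difference is that the paper separates the definition of sections over open subsets into its own step, and defers the flabbiness and support-condition checks to Proposition~\ref{prop-propBw} and the subsequent lemma rather than folding them into the proof of the theorem.
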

\begin{proof} We first prove the uniqueness statement. In fact, we prove the following stronger statement:
\begin{lemma} Let $w\in\CV$ and let $\Omega\in\lCV$ be such that $w\in\Omega$. Suppose that $\SC$ and $\SD$ are cofiltered sheaves on $\lCG$ that satisfy the properties (1), (2) and (3) listed in Theorem \ref{thm-filtBMP}, and suppose that we have an isomorphism $h\colon \SC^{\Omega,\le w}\to \SD^{\Omega,\le w}$. Then there is an isomorphism   $f\colon \SC\to\SD$ of cofiltered sheaves such that $f^{\Omega,\le w}=h$. 
\end{lemma} 
\begin{proof}[Proof of the Lemma] We construct the components $f^{\Omega,\le z}$ and $f^{E,\le z}$ inductively.
 For all $z\in\CV$ with $z\not\ge w$ and all $\Omega$ we have $\SC^{\Omega,\le z}=\SD^{\Omega,\le z}=0$, hence  $f^{\Omega,\le z}:=0$. For all edges $E$ we have, by property (3), $\SC^{E,\le z}=\SD^{E,\le z}=0$, hence  $f^{E,\le z}:=0$.  For a vertex $\Theta\ne \Omega$ we have $\SC^{\Theta,\le w}=0=\SD^{\Theta,\le w}$, hence $f^{\Theta,\le w}:=0$. Then we define $f^{\Omega,\le w}:=h$. It is obvious that so far we had no choice in the definitions, that the data we defined is compatible 
 with both the restriction and the $\rho$-maps, and that so far we only defined isomorphisms.

Now let $z>w$ and suppose that we have already constructed isomorphisms  $f^{\Omega,\le x}$ and $f^{E,\le x}$ for all vertices $\Omega$ and all edges $E$ and all $x<z$ and that these data are compatible with the restriction and the $\rho$-maps. We then also have  induced isomorphisms $f^{\Omega,<z}$ and $f^{E,<z}$ for all $\Omega$ and $E$. Let $\Omega$ be a vertex. If $z\not\in\Omega$, then we have $\SC^{\Omega,\le z}=\SC^{\Omega,<z}$ and $\SD^{\Omega,\le z}=\SD^{\Omega,<z}$, hence we have to define  $f^{\Omega,\le z}=f^{\Omega,<z}$.  Let $E\colon\Omega\llinie\Omega^\prime$ be an edge. If $z\not\in\Omega\cup\Omega^\prime$, then $\SC^{E,\le z}=\SC^{E,<z}$ and $\SD^{E,\le z}=\SD^{E,<z}$ and we have to define $f^{E,\le z}=f^{E,<z}$. 

Now suppose that $E\colon\Omega\llinie\Theta$ is an edge and $z\in\Omega$.  Then the isomorphism $f^{\Theta,\le z}\colon\SC^{\Theta,\le z}\to \SD^{\Theta,\le z}$, that we have already constructed, induces, by property (3), an isomorphism $\SC^{E,\le z}\to\SD^{E,\le z}$ that we have to take for $f^{E,\le z}$.  Finally, the isomorphisms that we have constructed so far yield an isomorphism $f\colon\SC^{\delta z}\to \SD^{\delta z}$. As $d_z^\SC\colon\SC^{\Omega,\le z}\to\SC^{\delta z}$ and $d_z^\SD\colon\SD^{\Omega,\le z}\to\SD^{\delta z}$ are projective covers in the category of graded $S$-modules there is an isomorphism  $f^{\Omega,\le z}$ such that the diagram 

\centerline{
\xymatrix{
\SC^{\Omega,\le z}\ar[d]_{d_z^{\SC}} \ar[r]^{f^{\Omega,\le z}} &\SD^{\Omega,\le z}\ar[d]_{d_z^{\SD} }\\
\SC^{\delta z}\ar[r]^{f^{\delta z}} &\SD^{\delta z}
}
}
\noindent
 commutes. This also ensures that the homomorphism $f^{\Omega,\le z}$ is compatible with the homomorphisms $\rho^{\SC,\le z}_{\Omega, E}$ and $\rho^{\SD,\le z}_{\Omega, E}$ for all edges $E$ that are adjacent to $\Omega$, and also with the  homomorphisms $f^{\Omega,\le x}\colon\SC^{\Omega,\le x}\to\SD^{\Omega,\le x}$ for all $x\le z$. 
So we defined indeed an extension, which is, in addition, an isomorphism. 
\end{proof}

Now we  carry on with the proof of the theorem and deal with the existence part. 
We will describe an algorithm that constructs  a cofiltered sheaf $\SB$. From the construction it will be immediately clear that the sheaf $\SB$ satisfies properties (1), (2) and (3).
For convenience, we will stipulate the following. If $\CJ$ is an open subset of $\CV$, then we say that {\em we have constructed everything for $\CJ$} if we have constructed for all vertices $\Omega$ and all edges $E$
\begin{itemize}
\item  the objects $\SB^{\Omega,\le x}$ and $\SB^{E,\le x}$ for all $x\in\CJ$, 
\item the homomorphisms $\SB^{\Omega,\le x}\to \SB^{\Omega,\le y}$ and $\SB^{E,\le x}\to \SB^{E,\le y}$ for all  $x,y\in\CJ$ with $y\le x$,
\item the homomorphisms $\rho^{\le x}_{E,\Omega}\colon\SB^{\Omega,\le x}\to\SB^{E,\le x}$ for all $x\in\CJ$ if $\Omega$ is a vertex of $E$. 
\end{itemize}
and if the data above satisfies all possible compatibility conditions for the sheaf data. 
In what follows, we describe the algorithm.

\noindent{\bf Step 1}
For $x\not\ge w$  and all vertices $\Omega\in\lCV$ and edges $E$ we set
$$
\SB^{\Omega,\le x}:=0,\quad \SB^{E,\le x}:=0.
$$
Then we set 
$$
\SB^{\Omega,\le w}:=
\begin{cases}
S,&\text{if $w\in\Omega$},\\
0,&\text{if $w\not\in\Omega$}
\end{cases}
$$
and
$$
\SB^{E,\le w}:=0
$$
for all edges $E$. The restriction and $\rho$-maps are the zero maps. 

\noindent
{\bf Step 2}
Let $\CJ$ be an open subset of $\CV$ and suppose that we have constructed everything for $\CJ$. Then we can already define
$$
\SB^{\Omega,\CJ}:=\left \{(b_y)\in\prod_{y\in\CJ}\SB^{\Omega,\le y}\mid b_y|_{\le z}=b_z\text{ for all $y\in\CJ$ and $z\le y$}\right\}
$$
and
$$
\SB^{E,\CJ}:=\left \{(b_y)\in\prod_{y\in\CJ}\SB^{E,\le y}\mid b_y|_{\le z}=b_z\text{ for all $y\in\CJ$ and $z\le y$}\right\}.
$$
 For $\CJ^\prime\subset\CJ$ the projection  $\prod_{y\in\CJ}\SB^{\Omega,\le y}\to\prod_{y\in\CJ^\prime}\SB^{\Omega,\le y}$ with kernel $\prod_{y\in\CJ\setminus\CJ^\prime}\SB^{\Omega,\le y}$  induces a homomorphism $\SB^{\Omega,\CJ}\to\SB^{\Omega,\CJ^\prime}$. Analogously  we obtain $\SB^{E,\CJ}\to\SB^{E,\CJ^\prime}$. 
The product $\prod_{y\in\CJ}\rho^{\le y}_{\Omega, E}$ induces a homomorphism
$$
\rho^{\CJ}_{\Omega,E}\colon \SB^{\Omega,\CJ}\to\SB^{E,\CJ}.
$$

\noindent
{\bf Step 3}
Let $z\in\CV$ and suppose that we have constructed everything for $\{< z\}$. We will now construct everything for $\{\le z\}$.  From Step 2 we obtain  $\SB^{\Omega,<z}$ and $\SB^{E,<z}$ and the homomorphism $\rho_{\Omega,E}^{<z}\colon \SB^{\Omega,<z}\to\SB^{E,<z}$.
 Let $E\colon\Omega\stackrel{\alpha}\llinie\Omega^\prime$ be an edge. If $z\not\in\Omega\cup\Omega^\prime$, then we set
$$
\SB^{E,\le z}:=\SB^{E,< z}
$$ 
and we let $\SB^{E,\le z}\to\SB^{E,<z}$ be the identity. This guarantees
$$
\supp_{\le}\, \SB^E\subset \supp_{\le}\, \SB^{\Omega,\le z}\cup\supp_{\le}\,\SB^{\Omega', \le z}.
$$
Suppose $z\in\Omega\cup \Omega^\prime$. As the roles of $\Omega$ and $\Omega^\prime$ are symmetric here, we can assume $z\in\Omega^\prime$. Then  we set
$$
\SB^{E,\le z}=\SB^{\Omega,<z}/\alpha\SB^{\Omega,<z}.
$$
Note that there is no typo here. We indeed use $\SB^{\Omega,<z}$ to construct the left hand side. The map $\rho^{<z}_{\Omega, E}\colon\SB^{\Omega,<z}\to \SB^{E,<z}$ factors over the quotient $\SB^{\Omega,<z}\to\SB^{\Omega,<z}/\alpha\SB^{\Omega,<z}$ and in this way we obtain the restriction homomorphism  
$$
\SB^{E,\le z}=\SB^{\Omega,<z}/\alpha\SB^{\Omega,<z}\to\SB^{E,<z}.
$$

Now we construct the stalks $\SB^{\Omega,\le z}$. Let us first suppose that $z\not\in\Omega$, then 
$$
\SB^{\Omega,\le z}=\SB^{\Omega,<z}
$$ 
and the restriction map is the identity.
This  guarantees
$$
\supp_{\le}\,\SB^\Omega\subset\Omega.
$$
Let $E\colon \Omega\stackrel{\alpha}\llinie \Omega^\prime$ be an edge. If $z\not\in\Omega^\prime$ we have $\SB^{E,\le z}=\SB^{E,<z}$, so we define
$$
\rho_{\Omega,E}^{\le z}=\rho_{\Omega,E}^{<z}.
$$
If $z\in\Omega^\prime$, then 
$\SB^{E,\le z}=\SB^{\Omega,<z}/\alpha\SB^{\Omega,<z}$
and we let $\rho_{\Omega,E}^{<z}$ be the canonical homomorphism $\SB^{\Omega,<z}\to \SB^{\Omega,<z}/\alpha\SB^{\Omega,<z}$.

Now we deal with the case $z\in\Omega$. We can already define the local sections
$$
\Gamma(<z,\SB)=\left\{(m_\Theta)\in\bigoplus_{\Theta}\SB^{\Theta,<z}\left|\begin{matrix}\rho_{\Theta,E}^{<z}(m_\Theta)= \rho_{\Theta^\prime,E}^{<z} (m_{\Theta^\prime})\\ \text{ for all edges $E\colon\Theta\llinie\Theta^\prime$}\end{matrix}\right\}\right..
$$
Let $E\colon\Theta\llinie\Omega$ be an edge. Then $z\not\in\Theta$, hence $\SB^{\Theta,<z}=\SB^{\Theta,\le z}$ and we have already constructed the homomorphism $\rho_{E,\Theta}^{\le z}\colon \SB^{\Theta,\le z}\to\SB^{E,\le z}$. Hence we can already define $\SB^{\delta z}$.
 We define $\SB^{\Omega,\le z}$ as the projective cover of $\SB^{\delta z}$ in the category of graded $S$-modules. It comes with a homomorphism $\SB^{\Omega,\le z}\to\SB^{\delta z}$. The latter space  is contained inside $\left(\bigoplus_{E\colon \Theta\llinie\Omega}\SB^{E,<z}\right)\oplus \SB^{\Omega,<z}$, and the projection onto the $E$-component yields the  homomorphism
$$
\rho_{\Omega,E}^{\le z}\colon\SB^{\Omega,\le z}\to\SB^{E,\le z},
$$
which we take as the restriction map. 
\end{proof}

\subsection{Properties of $\SB(w)$}
Let $w\in\CV$. In this section we collect some properties of the sheaf $\SB(w)$.\begin{proposition} \label{prop-propBw} 
\begin{enumerate}
\item The sheaf $\SB(w)$ is flabby.
\item For each vertex $\Omega$ and all $z\in\Omega$, the homomorphism $\Gamma(\le z,\SB)\to\SB^{\Omega,\le z}$ is surjective.
\end{enumerate}
\end{proposition}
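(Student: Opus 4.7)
My plan is to deduce both assertions immediately from property~(1) of Theorem~\ref{thm-filtBMP}, which is built into the existence half of the construction. That property asserts $\im\, d_x=\SB(w)^{\delta x}$ for every $\Omega\in\lCV$ and every $x\in\Omega$, and by definition $\SB(w)^{\delta x}$ is the image of $u_x$. Hence the two homomorphisms $d_x$ and $u_x$, which land in the common target $\bigl(\bigoplus_{E\in\CE_\Omega}\SB(w)^{E,\le x}\bigr)\oplus\SB(w)^{\Omega,<x}$, have exactly the same image. Everything else follows by feeding this equality into the criteria already available.

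For (1), I would invoke Lemma~\ref{lemma-flab}, which characterizes flabbiness by the condition $\im\, u_z\subset\im\, d_z$ for every $z\in\CV$. The equality just observed gives this inclusion trivially, so $\SB(w)$ is flabby.

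For (2), fix $z\in\Omega$ and $m\in\SB(w)^{\Omega,\le z}$. The element $d_z(m)$ lies in $\im\, d_z=\im\, u_z$, so there exists some $m'\in\Gamma(<z,\SB(w))$ with $u_z(m')=d_z(m)$. The lemma identifying $\Gamma(\le z,\SM)$ with $\{(m_1,m_2)\in\SM^{\Omega,\le z}\oplus\Gamma(<z,\SM)\mid d_z(m_1)=u_z(m_2)\}$ then produces a section $\tilde m\in\Gamma(\le z,\SB(w))$ whose $\Omega$-component equals $m$. This is exactly the required surjectivity.

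I expect no real obstacle: the substance has been placed entirely inside Theorem~\ref{thm-filtBMP}(1), which is in turn encoded into Step~3 of the algorithm through the choice of $\SB(w)^{\Omega,\le z}$ as a projective cover of $\SB(w)^{\delta z}=\im\, u_z$ (the surjectivity of a projective cover is what forces $\im\, d_z$ to equal $\im\, u_z$, rather than merely lie inside it). Once this property is granted both (1) and (2) are formal consequences, and the only thing to watch is that the lemma on extension spaces is applied correctly at the unique vertex $\Omega$ containing $z$, where the support condition~(S) makes the identification $(\SM^{\le z})^\Omega/M_{[z]}=\SM^{\delta z}$ literal.
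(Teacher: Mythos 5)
Your argument is correct. For part (1) you use exactly the paper's observation: by construction $\im d_z=\SB(w)^{\delta z}=\im u_z$, so condition (4) of Lemma~\ref{lemma-flab} holds trivially, giving flabbiness.

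For part (2), however, you take a genuinely different and somewhat more elementary route. The paper argues top-down: it observes that $\Gamma(<z,\SB)\to\SB^{\delta z}$ is surjective by definition of $\SB^{\delta z}$ and that $\Gamma(\le z,\SB)\to\Gamma(<z,\SB)$ is surjective by part (1), composes these via the cartesian square to get $\Gamma(\le z,\SB)\to\SB^{\delta z}$ surjective, and only then invokes the \emph{superfluity of the kernel} of the projective cover $\SB^{\Omega,\le z}\to\SB^{\delta z}$ to upgrade this to surjectivity of $\Gamma(\le z,\SB)\to\SB^{\Omega,\le z}$. You instead work element-by-element: given $m\in\SB^{\Omega,\le z}$, you use $\im d_z=\im u_z$ to produce $m'\in\Gamma(<z,\SB)$ with $u_z(m')=d_z(m)$, and then the unlabeled lemma in the section on extension spaces for sheaves (which identifies $\Gamma(\le z,\SM)$ with the fiber product of $\SM^{\Omega,\le z}$ and $\Gamma(<z,\SM)$ over the target of $d_z,u_z$) yields a preimage directly. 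Your version does not require flabbiness (part (1)) at all, nor any property of the projective cover beyond the surjectivity of $d_z$ onto $\SB^{\delta z}$; what it buys is independence of the two parts and a more explicit, constructive proof. What the paper's version buys is brevity once one is comfortable with cartesian squares and the standard fact that a map $X\to P$ whose composite with a projective cover $P\to Q$ is onto must itself be onto.

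One small inaccuracy in your closing sanity-check paragraph, which does not affect the proof: you invoke the module formula $M^{\delta x}=(M^{\le x})^\Omega/M_{[x]}$, but for a \emph{sheaf} $\SM$ the object $\SM^{\delta z}$ is defined directly as $\im u_z$, not by that quotient; the two are only identified after passing to global sections (Lemma~\ref{lemma-Bdeltax}), which is not needed in the present proof.
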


\begin{proof} 
Note that by construction, for each vertex $\Omega$ and each $z\in\Omega$ the image of  $u_z\colon\Gamma(>z,\SB)\to\SB^{\delta z}$ is contained in the image of $d_z\colon\SB^{\Omega,\le z}\to\SB^{\delta z}$. This implies (1) by Lemma \ref{lemma-flab}. 

So let us now show property (2). For $z\in\CV$ consider the cartesian diagram

\centerline{
\xymatrix{
\Gamma(\le z,\SB)\ar[d] \ar[r] & \SB^{\Omega,\le z} \ar[d]\\
\Gamma(<z,\SB) \ar[r] & \SB^{\delta z}. 
}
}
\noindent
Note that the lower horizontal map is surjective by definition of $\SB^{\delta z}$, and the vertical map on the left hand side is surjective by what we have already proven. So the composition $\Gamma(\le z,\SB)\to \SB^{\Omega,\le z}\to\SB^{\delta z}$ is surjective. As  $\SB^{\Omega,\le z}\to\SB^{\delta z}$ is a projective cover, $\Gamma(\le z,\SB)\to\SB^{\Omega,\le z}$ is surjective.
 \end{proof}
 \subsection{The global sections of $\SB(w)$}
 
 Let us denote by $B(w)$ the $(\CV,\le)$-cofiltered $\lCZ$-module obtained from $\SB(w)$ by taking sections.
\begin{lemma} \begin{enumerate}
\item The object $B(w)$ is contained in $\CC^{\refl}$.
\item There is an admissible epimorphism $\pi\colon B(w)\to\Delta(w)$.
\end{enumerate}
\end{lemma}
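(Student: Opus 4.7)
The plan is to verify, one at a time, the four defining properties of $\CC^{\refl}$ for $B(w)=\Gamma(\cdot,\SB(w))$ and then to construct $\pi$ using the fact that $w$ is the minimal vertex in the support of $\SB(w)$. First, the cofiltration on $B(w)$ comes from the componentwise restrictions of stalks of $\SB(w)$. Finite generation and torsion-freeness of $B(w)^{\le x}$ over $S$ follow from the inclusion $B(w)^{\le x}\hookrightarrow\bigoplus_{\Omega\in\lCV}\SB(w)^{\Omega,\le x}$, together with an induction on $x$ showing each stalk $\SB(w)^{\Omega,\le x}$ is a finitely generated graded free $S$-module: if $x\in\Omega$ the stalk is a projective cover in the category of graded $S$-modules (so free by graded Nakayama, and of finite rank by inductive finite generation of $\SB(w)^{\delta x}$); if $x\notin\Omega$ the stalk coincides with $\SB(w)^{\Omega,<x}$, which is controlled by finitely many strictly smaller stalks. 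Flabbiness of $B(w)$ as a cofiltered module is condition (3) of Lemma~\ref{lemma-surj}, and by the sheaf-theoretic version Lemma~\ref{lemma-flab} it follows from Proposition~\ref{prop-propBw}(1).

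For condition (S), fix $x\in\Theta$ and consider $m\in B(w)_{[x]}$. By Step~3 of the algorithm, $\SB(w)^{\Omega,\le x}=\SB(w)^{\Omega,<x}$ under the identity restriction for every $\Omega\ne\Theta$, so all components $m_\Omega$ with $\Omega\ne\Theta$ vanish. Since $\lCZ$ acts componentwise, this forces $\supp_{\lCZ}B(w)_{[x]}\subset\Theta$. Reflexivity of $B(w)^\CJ$ for open $\CJ$ is an application of Lemma~\ref{lemma-refl}: the stalks of $\SB(w)$ are free (and thus reflexive) $S$-modules by the above induction, and each edge module $\SB(w)^{E,\le x}=\SB(w)^{\Omega,<x}/\alpha\SB(w)^{\Omega,<x}$ is graded free over $S/\alpha S$ and therefore reflexive over $S/\alpha S$.

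For part~(2), by items (2) and (3) of Theorem~\ref{thm-filtBMP} all stalks and edge modules of $\SB(w)$ restricted to $\{\le w\}$ vanish except the stalk $\SB(w)^{\Theta,\le w}=S$ at the orbit $\Theta$ containing $w$; thus $B(w)^{\le w}$ is canonically identified with $\lDelta(w)$ as $\lCZ$-modules. Define $\pi^{\le x}:B(w)^{\le x}\to\Delta(w)^{\le x}$ to be $0$ when $x\not\ge w$ and to be the restriction $B(w)^{\le x}\to B(w)^{\le w}=\lDelta(w)$ when $x\ge w$. Compatibility with the cofiltered structure is the transitivity of restriction. To see that $\pi$ is an admissible epimorphism I first check $\ker\pi\in\CC^{\refl}$: finite generation, torsion-freeness, condition~(S), and reflexivity are inherited by the submodule $\ker\pi\subset B(w)$ compatibly with the cofiltration, while flabbiness of $\ker\pi$ follows by a diagram chase from flabbiness of both $B(w)$ and $\Delta(w)$. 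Then for each open $\CJ$, if $w\notin\CJ$ openness forces $x\not\ge w$ for all $x\in\CJ$, so $\Delta(w)^\CJ=0$ and $\pi^\CJ=0$ trivially; if $w\in\CJ$ then $\{\le w\}\subset\CJ$ and $\pi^\CJ$ factors as the restriction $B(w)^\CJ\to B(w)^{\le w}$, surjective by flabbiness, followed by the identification $B(w)^{\le w}=\Delta(w)^\CJ$. The equality $\ker\pi^\CJ=(\ker\pi)^\CJ$ is immediate from the definitions, completing exactness.

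The main technical hurdle will be the simultaneous induction establishing freeness and finite rank of all stalks $\SB(w)^{\Omega,\le x}$, particularly when $x\notin\Omega$ and the stalk is produced as the compatible-family object $\SB(w)^{\Omega,<x}$: one needs to exploit the finiteness of $\lCV$ and of the relevant intervals $[w,x]\cap\Omega$ built into the support constraints to keep the products appearing in the construction finite. Once freeness of stalks and of edge modules modulo $\alpha$ is secured, everything else follows formally from the algorithmic construction and the structural lemmas already in place.
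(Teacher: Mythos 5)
Your argument for part (1) follows the same route as the paper (flabbiness from Proposition~\ref{prop-propBw}(1), condition~(S) from the local-section construction, and reflexivity via Lemma~\ref{lemma-refl}), but at one point you overclaim: you assert that every stalk $\SB(w)^{\Omega,\le x}$ is graded free, obtaining this for $x\notin\Omega$ from the vague phrase ``controlled by finitely many strictly smaller stalks.'' When $x\notin\Omega$ the stalk $\SB(w)^{\Omega,\le x}=\SB(w)^{\Omega,<x}$ is an inverse limit of a diagram of free modules whose transition maps are not assumed surjective (the paper explicitly warns that the cofiltrations on $\SM^\Omega$ are not flabby), and such a limit need not be free. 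The paper is more careful here: it only asserts reflexivity, noting that $\SB(w)^{\Omega,\le x}$ sits inside the product $\prod_{y\in\Omega,\,y\le x}\SB(w)^{\Omega,\le y}$ of free (hence reflexive) modules as the kernel of a map into a torsion-free module, which suffices for Lemma~\ref{lemma-refl}. You then use your freeness claim to deduce that the edge modules $\SB(w)^{\Omega,<z}/\alpha\SB(w)^{\Omega,<z}$ are free over $S/\alpha S$; with only reflexivity available this step needs a separate justification. So you should replace the freeness induction by the reflexivity argument.

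For part (2), on the other hand, you go beyond what the paper actually writes: the published proof of this lemma addresses only part (1) and says nothing about the admissible epimorphism $\pi\colon B(w)\to\Delta(w)$, even though this epimorphism is used later in the classification of indecomposable projectives. Your construction of $\pi$ via restriction to $\{\le w\}$, the identification $B(w)^{\le w}\cong\lDelta(w)$ (since only the $\Theta$-stalk is nonzero there and all edge modules vanish), the surjectivity of $\pi^\CJ$ by flabbiness when $w\in\CJ$, and the verification that $\ker\pi$ lies in $\CC^{\refl}$ (in particular, $(\ker\pi)^\CJ$ is a direct summand of $B(w)^\CJ$ because the surjection onto the free module $\Delta(w)^\CJ$ splits, and the snake lemma forces flabbiness of $\ker\pi$ since $\Delta(w)_{[x]}=0$ for $x\ne w$ and $(\ker\pi)^{<w}=0$) is a correct and worthwhile supplement to the argument given in the text.
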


\begin{proof} We have already shown that $B(w)$ is flabby. The support condition (S) is an immediate consequence of the fact that $B(w)$ is constructed from the local sections of a cofiltered sheaf.  It remains to prove that $B(w)^{\CJ}=\Gamma(\CJ,\SB(w))$ is a reflexive $S$-module for all open susbets $\CJ$ of $\CV$. But this follows from Lemma \ref{lemma-refl} and the fact that $\SB(w)^{\Omega,\le x}$ is a reflexive $S$-module for any vertex $\Omega$ and $\SB(w)^{E,\le x}$ is a reflexive $S/\alpha S$-module for any edge labeled by $\alpha$. (Note that $\SB(w)^{\Omega,\le x}\subset\prod_{y\in\Omega,y\le x}\SB(w)^{\Omega,\le y}$ is a reflexive submodule.)
\end{proof}

In the next section we prove that $B(w)$ is a projective object in $\CC^{\refl}$. For this we need the following results.

\begin{lemma} \label{lemma-Bdeltax}  Let $\Omega$ be a vertex of $\lCG$ and $x\in\Omega$.
\begin{enumerate}
\item We have $B(w)^{\delta x}=\SB(w)^{\delta x}$.
\item The kernel of the homomorphism $B(w)^{<x}\to B(w)^{\delta x}$ is 
$$
\{(b_\Gamma)\in B(w)^{<x}\mid b_{\Omega}=0 \text{ and } b_\Gamma\in \alpha B(w)^{\Gamma,<x} \text{ if $\Gamma\stackrel{\alpha}\llinie\Omega$ }\}.
$$
\end{enumerate}
\end{lemma}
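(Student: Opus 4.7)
The plan is to reduce part (1) to two identifications following from Proposition \ref{prop-propBw}, and to deduce part (2) by combining Lemma \ref{lemma-locext} (for the inclusion $\supset$) with a lifting argument using flabbiness (for the inclusion $\subset$).

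For part (1), first I identify $(B(w)^{\le x})^\Omega$ with $\SB(w)^{\Omega,\le x}$. The projection $B(w)^{\le x}\to\SB(w)^{\Omega,\le x}$, $(b_\Gamma)_\Gamma\mapsto b_\Omega$, is surjective by Proposition \ref{prop-propBw}(2), and the support condition (S) ensures that this map already realizes the $\Omega$-character part of $B(w)^{\le x}$ without any further localization of edge labels. Next I identify $B(w)_{[x]}$ with $\ker d_x$: since by construction $\SB(w)^{\Gamma,\le x}=\SB(w)^{\Gamma,<x}$ whenever $\Gamma\neq\Omega$ (because $x\notin\Gamma$), any $(b_\Gamma)\in B(w)^{\le x}$ with $b|_{<x}=0$ must have $b_\Gamma=0$ for $\Gamma\neq\Omega$ and $b_\Omega|_{<x}=0$; the sheaf compatibility $\rho_{\Omega,E}^{\le x}(b_\Omega)=\rho_{\Gamma,E}^{\le x}(0)=0$ at each edge $E\colon\Omega\llinie\Gamma$ then forces $b_\Omega\in\ker d_x$. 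Since $\SB(w)^{\Omega,\le x}$ is by construction the projective cover of $\SB(w)^{\delta x}$, the short exact sequence $0\to\ker d_x\to\SB(w)^{\Omega,\le x}\to\SB(w)^{\delta x}\to 0$ combined with the identifications above gives $B(w)^{\delta x}=\SB(w)^{\delta x}$.

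For part (2), the inclusion $\supset$ is Lemma \ref{lemma-locext} applied to $M=B(w)\in\CC^{\refl}$ with $\Theta=\Omega$. For the reverse inclusion, I take $b$ in the kernel and lift it to some $\tilde b\in B(w)^{\le x}$ via flabbiness (Proposition \ref{prop-propBw}(1)); part (1) then identifies the induced map to $B(w)^{\delta x}$ with $d_x$ applied to the $\Omega$-stalk, so $\tilde b_\Omega\in\ker d_x$, whence $b_\Omega=\tilde b_\Omega|_{<x}=0$. For an edge $E\colon\Gamma\stackrel{\alpha}{\llinie}\Omega$, compatibility gives $\rho_{\Gamma,E}^{\le x}(\tilde b_\Gamma)=\rho_{\Omega,E}^{\le x}(\tilde b_\Omega)=0$, and property (3) of Theorem \ref{thm-filtBMP} identifies this kernel as $\alpha\SB(w)^{\Gamma,\le x}=\alpha\SB(w)^{\Gamma,<x}$; hence $b_\Gamma=\tilde b_\Gamma|_{<x}\in\alpha B(w)^{\Gamma,<x}$, as required.

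The main obstacle will be the identification $(B(w)^{\le x})^\Omega=\SB(w)^{\Omega,\le x}$ in part (1); here one must combine Proposition \ref{prop-propBw}(2) with the support condition (S) carefully, to see that the $\Omega$-character part of $B(w)^{\le x}$ coincides on the nose with the un-localized sheaf stalk. Once that is in place, everything else follows by unwinding the compatibility conditions of sheaf sections and invoking property (3) of Theorem \ref{thm-filtBMP}.
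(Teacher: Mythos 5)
Your proof is correct, and part (1) is essentially the paper's argument: identify $(B(w)^{\le x})^\Omega$ with $\SB(w)^{\Omega,\le x}$ via Proposition~\ref{prop-propBw}, identify $B(w)_{[x]}$ with $\ker d_x$ inside the stalk, and pass to the quotient. Your appeal to condition~(S) in the first identification is a bit loose (what is really needed is that localizing at $0$ kills all the $\SB^{E,\le x}$, so the generic decomposition of $\Gamma(\le x,\SB)$ is by the vertex stalks), but the substance is the same.

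In part (2), however, you take a genuinely more roundabout route than the paper. The paper simply observes that $B(w)^{<x}=\Gamma(<x,\SB(w))$, that $B(w)^{\delta x}=\SB(w)^{\delta x}$ by part~(1), and that the resulting map is the sheaf-theoretic $u_x$, whose kernel is $\{(b_\Gamma)\mid b_\Omega=0,\ \rho_{\Gamma,E}^{<x}(b_\Gamma)=0\}$ by definition; property~(3) of Theorem~\ref{thm-filtBMP} then gives $\ker\rho_{\Gamma,E}^{<x}=\ker\rho_{\Gamma,E}^{\le x}=\alpha\SB(w)^{\Gamma,<x}$, yielding both inclusions in one step. You instead split this into two inclusions: for $\supset$ you invoke Lemma~\ref{lemma-locext}, and for $\subset$ you lift to $\Gamma(\le x,\SB)$ via flabbiness and use compatibility plus property~(3). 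This works (note that $B(w)\in\CC^{\refl}$ is established just before this lemma, so Lemma~\ref{lemma-locext} does apply), but it is overkill: Lemma~\ref{lemma-locext} is a general statement about reflexive objects whose proof uses reflexivity and the GKM condition in an essential way, whereas for $B(w)$ the relevant containment is built directly into the sheaf via property~(3), which already describes $\ker\rho_{\Gamma,E}^{\le x}$ exactly as $\alpha\SB(w)^{\Gamma,\le x}$. So your argument buys nothing here beyond what the construction hands you for free, at the cost of invoking a heavier lemma and an extra lifting step.
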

\begin{proof} For convenience we set $B=B(w)$ and $\SB=\SB(w)$.  By definition, $B^{\delta x}=(B^{\le x})^\Omega/B_{[x]}$. By Proposition \ref{prop-propBw}, $(B^{\le x})^\Omega=(\SB^{\Omega})^{\le x}$. Moreover, we can identify $B_{[x]}$ with  the space of sections in $\Gamma(\le x,\SB)$ which are supported on $\{x\}$. This is the set of all elements $m$ in $(\SB^{\Omega})^{\le x}$ with $m|_{<x}=0$ and $\rho_{\Omega^\prime,E}^{\le x}(m)=0$. By construction, this coincides with the kernel of the projective cover map $d_x\colon \SB(w)^{\Omega,\le x}\to \SB(w)^{\delta x}$. As this latter map is surjective, we obtain an isomorphism
$$
B(w)^{\delta x}=(B(w)^{\le x})^\Omega/B(w)_{[x]}=\SB^{\Omega,\le x}/\ker d_x=\SB(w)^{\delta x}.
$$
This shows claim (1).

For claim (2), we can identify $B(w)^{<x}$ with $\Gamma(<x,\SB(w))$ and, using (1), the kernel of $B(w)^{<x}\to B(w)^{\delta x}$ with the kernel of $\Gamma(<x,\SB(w))\to\SB(w)^{\delta x}$. By construction, this is what is stated above. 
\end{proof}

\subsection{The projectivity of $B(w)$}

Here we prove the main result of this article, namely the projectivity of the cofiltered objects $B(w)$ in the exact category $\CC^{\refl}$. We show, moreoever, that each indecomposable projective object occurs, up to a shift in the grading, in this way.

\begin{theorem}\begin{enumerate}
\item  For each $w\in\CV$, the object $B(w)$ is projective in $\CC^{\refl}$. 
\item Let $P$ be an indecomposable projective object in  $\CC^{\refl}$. Then there are  uniquely determined   $w\in \CV$ and $n\in \DZ$ such that $ P$ is isomorphic to $B(w)[n]$.
\end{enumerate}
\end{theorem}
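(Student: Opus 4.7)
The plan for part~(1) is to verify the two hypotheses of Proposition~\ref{prop-projobj} for $B = B(w)$. Hypothesis~(1), that each stalk $(B(w)^{\le x})^\Omega$ (for $x\in\Omega$) is a projective graded $S$-module, will follow from the identification $(B(w)^{\le x})^\Omega \cong \SB(w)^{\Omega,\le x}$ of Lemma~\ref{lemma-Bdeltax}(1) (used together with Proposition~\ref{prop-propBw}(2)) and the fact that by construction $\SB(w)^{\Omega,\le x}$ is either $S$ (when $x=w\in\Omega$) or the projective cover of $\SB(w)^{\delta x}$ in graded $S$-modules. For hypothesis~(2), given $f^{<x}\colon B(w)^{<x}\to M^{<x}$ I would produce the lift $f^{\le x}$ via the pullback description of Lemma~\ref{lemma-extspac}: using Lemma~\ref{lemma-Bdeltax}(2) together with the $S$-linearity of $f^{<x}$ and Lemma~\ref{lemma-locext}, one shows that $u_x^M\circ f^{<x}$ vanishes on $\ker u_x^B$, hence descends to a morphism $\bar f\colon B(w)^{\delta x}\to M^{\delta x}$; projectivity of $(B(w)^{\le x})^\Omega$ then lifts $\bar f\circ d_x^B$ through the surjection $(M^{\le x})^\Omega\to M^{\delta x}$, and Lemma~\ref{lemma-extspac} assembles the resulting pair into~$f^{\le x}$.

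For part~(2) I will first establish indecomposability of $B(w)$. Suppose $B(w)=B_1\oplus B_2$; restricting to $\le w$ splits the indecomposable rank-one free $S$-module $B(w)^{\le w}\cong\lDelta(w)$, so after relabeling $B_2^{\le w}=0$. If $B_2$ were nonzero, picking $x$ minimal in $\supp_\le B_2$ gives $B_2^{<x}=0$, hence $B_2^{\delta x}=0$; since direct summands of projective covers are projective covers of the corresponding summands, $(B_2^{\le x})^\Theta$ (with $x\in\Theta$) would be the projective cover of $0$ and therefore itself vanish, contradicting $0\ne B_{2,[x]}\subset(B_2^{\le x})^\Theta$.

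To show that every indecomposable projective $P\in\CC^{\refl}$ is isomorphic to some $B(w)[n]$, my approach is to realize $P$ as a direct summand of a sum of shifted $B(\cdot)$'s. For each $x\in\supp_\le P$ with $x\in\Theta\in\lCV$, I choose homogeneous elements $m_{x,i}\in P_{[x]}$ whose classes modulo $\fm\cdot P_{[x]}$ form a graded $k$-basis (here $\fm\subset S$ is the augmentation ideal), and set $n_{x,i}=-\deg m_{x,i}$. Each $m_{x,i}$ yields a $\lCZ$-module map $B(x)[n_{x,i}]^{\le x}=S[n_{x,i}]\to P^{\le x}$ landing in $P_{[x]}$; the extension mechanism from part~(1), applied inductively upward, produces $\phi_{x,i}\colon B(x)[n_{x,i}]\to P$ realizing this datum at $\le x$. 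The assembled map $\Phi\colon\bigoplus_{x,i}B(x)[n_{x,i}]\to P$ has each $\Phi^{[y]}$ surjective by the Nakayama choice of generators; flabbiness plus induction on open subsets upgrades this to surjectivity of $\Phi^\CJ$ for every open $\CJ$, and the sectionwise kernel is readily checked to lie in $\CC^{\refl}$ (flabbiness from the snake lemma applied to the flabby source and target, support condition inherited from the source, and reflexivity of sections as the kernel of a map of reflexive $S$-modules). Thus $\Phi$ is an admissible epimorphism; projectivity of $P$ furnishes a splitting, exhibiting $P$ as a summand of $\bigoplus B(x)[n_{x,i}]$. Theorem~\ref{thm-KrullSchm} together with the indecomposability of each summand then forces $P\cong B(w)[n]$ for some pair $(w,n)$, with $w$ equal to the minimum of $\supp_\le P$ and $n$ determined by the isomorphism $P^{\le w}\cong S[n]$.

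The technically most delicate step will be the construction and admissibility of $\Phi$: the inductive existence of each $\phi_{x,i}$ relies on the projective-cover extension procedure of part~(1) (the same machinery that powers hypothesis~(2) of Proposition~\ref{prop-projobj}), while admissibility of $\Phi$ ultimately rests on the Nakayama-style generating choice combined with flabbiness and the snake lemma. Once admissibility is secured, projectivity of $P$ and Krull--Schmidt (Theorem~\ref{thm-KrullSchm}) conclude the argument in a standard way.
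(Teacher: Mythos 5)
Your part~(1) is essentially the paper's argument: verify the two hypotheses of Proposition~\ref{prop-projobj}, using Lemma~\ref{lemma-Bdeltax} to identify $(B(w)^{\le x})^\Omega\cong\SB(w)^{\Omega,\le x}$ and $B(w)^{\delta x}\cong\SB(w)^{\delta x}$, and Lemma~\ref{lemma-locext} to show the map descends to $f^{\delta x}\colon B(w)^{\delta x}\to M^{\delta x}$; you simply make explicit the assembly step via Lemma~\ref{lemma-extspac} that the paper leaves implicit. A genuine improvement is your explicit proof that $B(w)$ is indecomposable: the paper invokes Theorem~\ref{thm-KrullSchm}(2) on an endomorphism of $B(w)[n]$, which presupposes indecomposability, but never establishes it; your argument (a nonzero complement $B_2$ with minimal $x\in\supp_\le B_2$ would have $(B_2^{\le x})^\Theta$ a projective cover of $B_2^{\delta x}=0$, forcing $B_{2,[x]}=0$) closes this gap cleanly. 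For the classification itself, however, you and the paper diverge. The paper's route is short: choose $w$ minimal with $P^{\le w}\neq 0$, split off an admissible epimorphism $P\to\Delta(w)[n]$, use projectivity of both $B(w)[n]$ and $P$ to build $f\colon B(w)[n]\to P$ and $g\colon P\to B(w)[n]$ over $\Delta(w)[n]$, observe $g\circ f$ is not nilpotent, and conclude by Fitting/Krull--Schmidt. Your route instead realizes $P$ as a direct summand of a globally constructed admissible cover $\Phi\colon\bigoplus_{x\in\supp_\le P,\,i}B(x)[n_{x,i}]\to P$. This is the standard strategy for Krull--Schmidt classifications, but note a concrete danger here: $\CV$ and hence $\supp_\le P$ may be infinite (this is exactly the situation in the periodic alcove example, where $\{\preceq A\}$ is infinite), so the direct sum $\bigoplus_{x,i}B(x)[n_{x,i}]$ need not lie in $\lCZ\catmod^{\le}$ — e.g.\ its $\{\le y\}$-piece need not be finitely generated over $S$ — and then the projectivity of $P$ cannot be invoked to split $\Phi$. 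You would need an additional truncation or support argument to repair this. The paper's approach sidesteps the issue entirely, since it only ever compares $P$ against a single $B(w)[n]$.
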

\begin{proof} We prove part (1). For this we  check the two properties listed in Proposition \ref{prop-projobj}. For each $\Omega$ and all $x\in\Omega$,  $B(w)^{\le x,\Omega}$ is a projective graded $S$-module by construction. So it remains to check that for each $M\in\CC^{\refl}$ and all $x\in\CV$ the homomorphism $\Hom(B(w),M^{\le x})\to \Hom(B(w),M^{<x})$ is surjective. So suppose we are given a homomorphism $f\colon B(w)^{<x}\to M^{<x}$. We claim that there is a (unique) homomorphism $f^{\delta x}\colon B(w)^{\delta x}\to M^{\delta x}$ such that

\centerline{
\xymatrix{
M^{<x} \ar[r] & M^{\delta x} \\
B(w)^{<x}\ar[r]\ar[u]& B(w)^{\delta x}\ar[u] 
}
}
\noindent
commutes. By Lemma \ref{lemma-Bdeltax}  it suffices to show that an element $(m_\Gamma)\in M^{<x}$ with $m_\Omega=0$ and $m_\Gamma\in\alpha M^{<x,\Gamma}$ for each edge $\Gamma\stackrel{\alpha}\linie\Omega$  maps to zero in $M^{\delta x}$. But this is Lemma \ref{lemma-locext}. 

Now we deal with the classification. Let $w$ be a minimal vertex in $\CV$ with the property that $P^{\leq w}\neq 0$. By the minimality of $w$ we have that $P^{\le w}$ is isomorphic to a direct sum of degree-shifted copies of $\Delta(w)$. Let $\Delta(w)[n]$ be one of its direct summands. Then there is an admissible surjection $h\colon P\to \Delta(w)[n]$. Now, by projectivity we have homomorphisms $f\colon B(w)[n]\to P$ and $g\colon P\to B(w)[n]$ such that the diagram 

\centerline{
\xymatrix{
B(w)[n]\ar[r]^f\ar[dr] &P\ar[r]^g\ar[d]^h& B(w)[n]\ar[dl]\\
&\Delta(w)[n]&
}
}
\noindent
commutes.  The composition $g\circ f$ is hence an endomorphism of $B(w)[n]$ that is not nilpotent, hence must be an automorphism by Theorem \ref{thm-KrullSchm}. As $P$ is indecomposable, this implies that $B(w)[n]\cong P$. Finally, the uniqueness of $w$ and $n$ follows from the fact that $w$ is the smallest element in $\supp_{\le}\, B(w)[n]$ and $B(w)^{\le w}[n]\cong \Delta(w)[n]\cong S[n]$ as  graded $S$-modules.

\end{proof}

\end{document}